\newcommand{\esp}[1]{\mathbb{E}\left[ #1 \right]}
\newcommand{\R}{\mathbb{R}}
\newcommand{\pcomm}{p_{\rm comm}}
\newcommand{\pcomp}{p_{\rm comp}}
\newcommand{\comm}{{\rm comm}}
\newcommand{\comp}{{\rm comp}}
\newcommand{\nfs}{m}
\newcommand{\nnodes}{n}
\newcommand{\ntokens}{K}
\newcommand{\token}{{\rm token}}
\newcommand{\ptilde}{\tilde{p}_{i,t}}
\newcommand{\proba}{p_i}
\newcommand{\lr}{\eta_{t}}
\newcommand{\deltaproba}{\delta_{i,t}}
\newcommand{\nbjumps}{N_{\rm jumps}}
\newcommand{\projA}{A^\dagger A}
\newcommand{\hnet}{{h}}
\newcommand{\hcomp}{{h_{\comp}}}
\newcommand{\hcomm}{{h_{\comm}}}
\newcommand{\Rp}{R_p}
\newcommand{\ncomps}{T_\comp}
\newcommand{\ncomms}{T_\comm}
\newcommand{\smooth}{\tilde{L}}
\newcommand{\sigmatilde}{\tilde{\sigma}}
\newtheorem{assumption}{Assumption}
\newtheorem{lemma}{Lemma}
\title{A principled framework for the design and analysis of token algorithms}
\author{%
	Hadrien Hendrikx \\ %\thanks{Use footnote for providing further information
	%about author (webpage, alternative address)---\emph{not} for acknowledging
	%funding agencies.} \\
	EPFL\\ 
	\texttt{hadrien.hendrikx@epfl.ch}\\
	% examples of more authors
	%   \And
	%  Martin Jaggi \\
	% Affiliation \\
	% Address \\
	%  \texttt{martin.jaggi@epfl.ch} \\
	% \AND
	% Coauthor \\
	% Affiliation \\
	% Address \\
	% \texttt{email} \\
	% \And
	% Coauthor \\
	% Affiliation \\
	% Address \\
	% \texttt{email} \\
	% \And
	% Coauthor \\
	% Affiliation \\
	% Address \\
	% \texttt{email} \\
}
\newtheorem{theorem}{Theorem}
\begin{document}

	\maketitle

	\begin{abstract}
		We consider a decentralized optimization problem, in which $n$ nodes collaborate to optimize a global objective function using local communications only. While many decentralized algorithms focus on \emph{gossip} communications (pairwise averaging), we consider a different scheme, in which a ``token'' that contains the current estimate of the model performs a random walk over the network, and updates its model using the local model of the node it is at. Indeed, token algorithms generally benefit from improved communication efficiency and privacy guarantees. We frame the token algorithm as a randomized gossip algorithm on a conceptual graph, which allows us to prove a series of convergence results for variance-reduced and accelerated token algorithms for the complete graph. We also extend these results to the case of multiple tokens by extending the conceptual graph, and to general graphs by tweaking the communication procedure. The reduction from token to well-studied gossip algorithms leads to tight rates for many token algorithms, and we illustrate their performance empirically. 
	\end{abstract}

	\section{Introduction}
	
	Modern machine learning relies on increasingly large models that train on increasingly large datasets: distributed optimization is thus crucial to scaling the training process. In the centralized paradigm, at the heart of \emph{Federated Learning}~\citep{kairouz2021advances}, the system relies on a server that aggregates models and gradients, and manages the nodes. Although quite efficient, this setting has several drawbacks: (i) nodes need to trust the server enough to send it sensitive data, (ii) there is a communication bottleneck at the server, which limits scaling and (iii) training stops if the server fails.  
	
	In the \emph{decentralized} setting~\citep{boyd2006randomized,lopes2007incremental,shi2015extra,nedic2017achieving}, nodes are linked by a communication graph, and directly communicate with their neighbours in this graph instead of a central coordinator. This allows for better scaling, and is also more robust since the server is no longer the single point of failure. Yet, due to the lack of coordination, decentralized algorithms often require many peer-to-peer communications compared to centralized ones, and a gain in privacy is not always guaranteed. 
	
	\emph{Token}, or random-walk algorithms~\citep{bertsekas1997new,ram2009incremental,johansson2010randomized,shah2018linearly,mao2020walkman}, work in the following way: a token owns an estimate of the model, ``walks'' over the graph, and sequentially visits nodes. When the token is held by a node, it updates its model, either by computing a gradient using the node's data, or by using the local model of the node. Then, the token is transmitted (or ``jumps'') to a new node.
	
	Some instanciations of these algorithms can be seen as a middle-point between centralized and decentralized algorithms. Indeed, the token plays the role of a server, since it owns the global model and receives updates from nodes. Yet, the token is no longer attached to a physical node as in the centralized case, but rather exchanged between computing nodes in a decentralized way.
	
	Besides, unlike standard centralized algorithms, each node may maintain a local parameter and update it using local updates. In that sense, some token algorithms (such as the ones developed in this work) closely resemble local methods, that are very popular in federated learning~\citep{mcmahan2017communication,stich2018local,lin2018don}. The main difference is that instead of exchanging information through periodic exact averaging or gossip steps, communication is ensured through the roaming token. This allows to easily adapt the algorithms to the features of the system, by making either more local steps or more communication steps. 
	
	\subsection{Related work}
	
	Many early works study token (or random-walk) algorithms~\citep{bertsekas1997new,nedic2001incremental,ram2009incremental,johansson2010randomized}. Yet, they focus on stochastic (sub)gradients algorithms, and thus lack linear convergence guarantees. The recent literature on token algorithms can be divided into two main lines of work that reflect the two main strengths of token algorithms: \textit{communication efficiency} and \textit{privacy preservation}.  
	
	\textbf{Communication efficiency.} \citet{mao2020walkman} introduce \emph{Walkman}, a token algorithm based on an augmented Lagrangian method. Walkman works for general graphs and is shown to be communication-efficient provided graphs are well-connected enough. Yet, it only obtains linear convergence on least squares problems. When Walkman uses gradients (instead of proximal operators), it requires a step-size inversely proportional to the square of the smoothness constant of the problem, which is impractical. Variants of Walkman guarantee communication efficiency when walking over Hamiltonian cycles~\citep{mao2018walk}. \citet{balthazar2020distributed} consider the problem of distributed linear estimation, and use a token algorithm to aggregate the measurements of all nodes.
	
	\textbf{Multiple tokens.} When a single token walks the graph, there are no parallel communications. A natural fix to speed up algorithms is to allow multiple tokens to walk the graph in parallel, as recently done by~\citet{chen2022asynchronous}, whose approach is also based on an augmented Lagrangian method.
	
	\textbf{Privacy Preservation.} The favorable privacy guarantees claimed by decentralized algorithms are actually mainly proven for token algorithms. For instance, the Walkman algorithm presented above has also been extended to guarantee privacy preservation~\citep{ye2020incremental}. Besides,~\citet{cyffers2022privacy} show that token algorithms satisfy a relaxation of local differential privacy, and match the guarantees offered by a trusted central server. They give a simple algorithm for ring and complete topologies. Similarly, \citet{bellet2020started} study the privacy guarantees of a rumour spreading algorithm, and show that a single token spreading the rumour is optimal, while multiple tokens achieve optimal trade-offs between privacy and speed. In this work, we focus on the convergence guarantees of token algorithms, and leave the privacy preservation guarantees for future work.
	
	\textbf{Dual Decentralized algorithms.} Our framework is based on applying the dual approach for decentralized algorithms~\citep{jakovetic2014linear,boyd2011distributed} to the analysis of token algorithms. This dual approach leads to very fast algorithms, and in particular \citet{scaman2017optimal,uribe2020dual} used it to develop optimal decentralized algorithms. Then,~\citet{hendrikx2018accelerated} showed that it can also be used to accelerate randomized gossip, and used an augmented graph formulation to obtain decentralized variance-reduced extensions~\citep{hendrikx2019accelerated,hendrikx2020optimal,hendrikx2020dual}.
	
	\subsection{Our contributions}
	As discussed in the previous section, token algorithms are still rare, and very few algorithms offer linear convergence guarantees, let alone integrating more advanced optimization tricks such as variance reduction or acceleration. Besides, most of the literature focuses on only one token, which is communication-efficient but very slow. In this work, we pave the way for the design and analysis of new efficient token algorithms, and in particular we: 
	\begin{enumerate}
		\item Introduce a general framework for designing and analyzing token algorithms.
		\item Give a simple algorithm with linear convergence guarantees on complete graphs that match those of both centralized and decentralized (gossip) optimization. 
		\item Speed up this simple algorithm by using multiple tokens.
		\item Leverage the general framework to analyze variants of the simple token algorithm, such as stochastic gradients with variance reduction and acceleration. 
		\item Extend the convergence results to general graphs, by tweaking the communication protocol.
	\end{enumerate}
	The general framework is based on the dual approach for decentralized algorithms~\citep{jakovetic2014linear,scaman2017optimal}, and in particular~\citet{hendrikx2020dual}, and so the algorithmic core similar, namely Bregman coordinate descent (with some adaptations) for the simple and variance-reduced algorithms, and Accelerated Proximal Coordinate Descent~\citep{lin2015accelerated} for the accelerated one. 
	
	\section{Conceptual graph approach for token algorithms.}
	
	\begin{figure}
		\centering
		\includegraphics[width=0.8\linewidth]{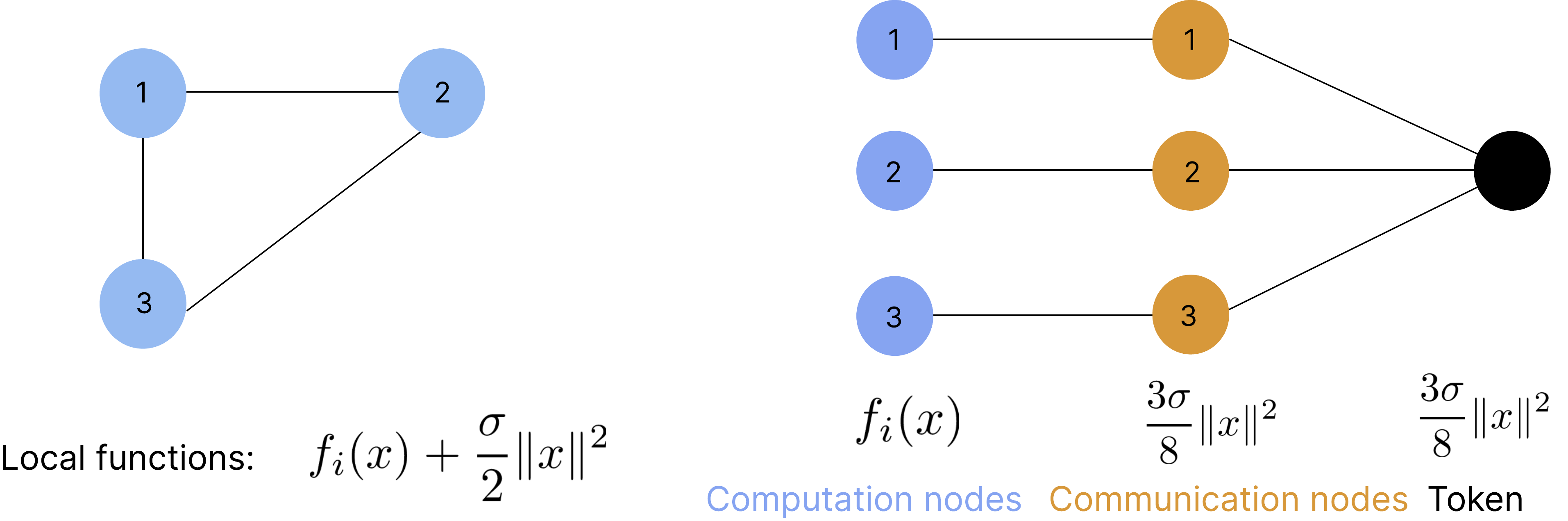}
		\caption{Left: base communication graph. Right: Conceptual graph, with modified local objectives.}
		\label{fig:simple_token}
	\end{figure}

	\subsection{Building the conceptual graph}
	We consider the following distributed problem, where each $f_i$ is a local function at node $i$:
	\begin{equation} \label{eq:prob_main}
	\min_{\theta \in \R^{d}} \sum_{i=1}^n \left[f_i(\theta) + \frac{\sigma}{2}\|\theta\|^2\right].
	\end{equation}
	We assume that each $f_i$ is convex and $L$-smooth over $\R^d$, which writes if $f_i$ is twice differentiable as $0 \preccurlyeq \nabla^2 f_i(x) \preccurlyeq L \ I_d$, where $I_d \in \R^{d \times d}$ is the identity matrix of dimension $d$. The \emph{condition number} of this problem is $\kappa =  1 + L/\sigma$.
	The key idea of this paper is to reduce the analysis of token algorithms to that of standard decentralized gossip algorithms on conceptual graphs. We follow the \emph{dual approach} for building decentralized optimization algorithms, and rewrite Problem~\eqref{eq:prob_main} as:
	\begin{equation}\label{eq:simple_token}
	\underset{\forall i, \ \theta^{(i)} = u^{(i)}, \text{ and } u^{(i)} = v}{\min_{\theta \in \R^{n \times d}, \ u \in \R^{n\times d}, \ v \in \R^d,}} \sum_{i=1} f_i(\theta^{(i)}) + \frac{n\sigma}{2(n+1)} \|u^{(i)}\|^2 + \frac{n\sigma}{2(n+1)}\|v\|^2.
	\end{equation}
	To write this reformulation, we have applied the consensus constraints (equality constraints for neighbours) given by the conceptual graph represented in Figure~\ref{fig:simple_token} (right). To build this conceptual graph, we add a conceptual node (with its own parameter) corresponding to the token, with local objective $\nnodes \sigma \| \cdot\|^2 / 2(\nnodes+1)$, and we split all local nodes into a computation part (that contains $f_i$), and a communication part (that contains $\nnodes\sigma \| \cdot\|^2 / \nnodes(n+1)$. Node that the total regularization weight is still $\nnodes\sigma/2$. Then, all computation nodes are linked to their respective communication nodes, which are themselves linked to the token. Splitting each node between communication and computation part has two benefit: (i) it allows us to use the dual-free trick from~\citet{hendrikx2020dual}, and obtain primal updates despite the dual approach, and (ii) it allows to decouple communications and computations. In particular, nodes can perform local steps even when they don't hold the token.
	
	Now that we have defined the framework, it is important to make sure that this corresponds to a token algorithm. Updating the edge between the token and node $i$ at time $t$ in the conceptual graph means that the token jumped to node $i$ at team $t$. Thus, to ensure the token aspect, we must enforce that if the edge between the token and node $i$ is updated at time $t$, and the edge between the token and node $j$ is updated at time $t+1$, then node $j$ has to be a neighbour of node $i$ (since the token jumped from $i$ to $j$ at time $t+1$). We apply the dual approach to Problem~\eqref{eq:simple_token}, which is inherited from the conceptual graph, but \emph{the sampling of the edges is ruled by the actual communication graph}. In a complete graph, this does not impose any additional constraints, and this is why our convergence results are initially derived in this setting. In arbitrary graphs, this means that the sampling of the edges (and so the coordinate descent algorithm applied to the dual formulation) must follow a Markov Chain, which leads to considerably harder analyses. In Section~\ref{sec:general_graphs}, we present a trick to circumvent this difficulty, which consists in not performing the update step every time the token jumps to a new node.
	
	\begin{figure}
		\centering
		\includegraphics[width=0.9\linewidth]{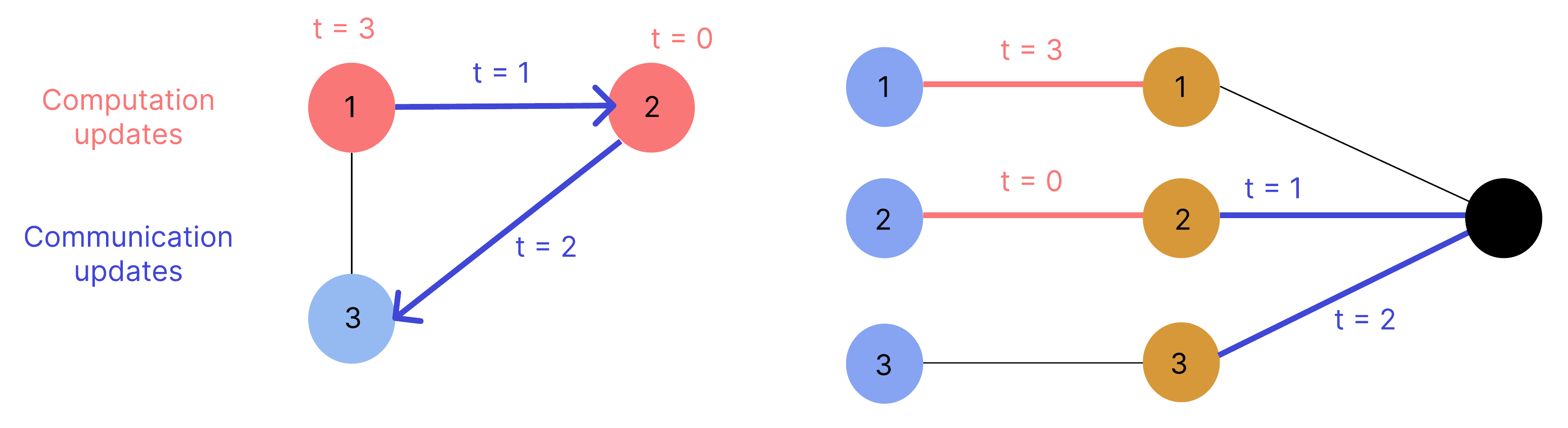}
		\caption{An example execution of the token algorithm on the base graph (left), and the corresponding edges updated in the conceptual graph (right). The sequence is: $t=0$: local computation at node $2$, $t=1$: the token jumps to node $2$, $t=2$: the token jumps to node $3$, $t=4$: local computation at node $0$. Note that the updates at $t=2$ and $t=3$ can actually be performed in parallel since they affect different nodes, and that the token updates its estimate with the node it arrives at after each jump.}
		\label{fig:token_example}
	\end{figure}
	
	We will now show that this conceptual graph view allows to efficiently design fast algorithms, by making clear links with dual approaches for decentralized optimization. This will prove especially useful in the next section, when we introduce multiple tokens, variance-reduction and acceleration. In the basic case (one token, full gradients), Equation~\eqref{eq:simple_token} resembles the consensus formulation of Walkman~\citep{mao2020walkman}, which is also obtained through a (primal-)dual formulation. Yet, we split each node into two subnodes to allow for local steps, and we can then harness the power of the dual approach for decentralized optimization to significantly improve the base algorithm, as done in Section~\ref{sec:additional_tricks}. 
	
	\subsection{Deriving the single token algorithm}
	Following~\citet{hendrikx2020dual}, we take a dual formulation of Problem~\eqref{eq:simple_token}, and apply Bregman block coordinate descent to obtain the simple token algorithm, which corresponds to Algorithm~\ref{algo:simple_token} with $\ntokens=1$. This leads to dual-free updates~\citep{lan2017optimal}, which are simple to implement. Yet, in~\citet{hendrikx2020dual}, all communication edges are sampled at once, which would mean that the token receives updates from all nodes at the same time. This is not possible in our case, so we adapted the Bregman block coordinate descent algorithm to better fit the structure of Problem~\eqref{eq:simple_token}, as detailed in Appendix~\ref{app:bcd}. 
	
	\begin{algorithm}
		\caption{Token Gradient Descent$(z_0)$}
		\label{algo:simple_token}
		\begin{algorithmic}[1]
			\STATE $\tilde{\sigma} = \frac{\nnodes}{\nnodes+\ntokens} \sigma$, $\alpha = \frac{2 \ntokens}{L}$, $\eta = \min\left( \frac{\tilde{\sigma} p_\comm}{2\nnodes \ntokens}, \frac{\pcomp}{\nnodes\alpha(1 + L / \tilde{\sigma})} \right)$, $\rho_\comm = \frac{\nnodes \ntokens \eta}{p_\comm\tilde{\sigma}}$, $\rho_{\comp} = \frac{\nnodes \alpha \eta}{\pcomp}$. \COMMENT{Init}
			\STATE $\forall i \in [\nnodes]$, $\theta_0^{(i)} = - \nabla f_{i}(z_0^{(i)}) / \tilde{\sigma}$;   \ \ $\forall k \in [\ntokens]$, $\theta_0^{\token, k} = 0$ . \COMMENT{$z_0$ is arbitrary but not $\theta_0$.}
			\FOR[Run for $T$ iterations]{$t=0$ to $T-1$}
			\IF{ communication step (with probability $p_\comm$)}
			\STATE Pick $i \sim \mathcal{U}([n]), \ k \sim \mathcal{U}([\ntokens])$ \COMMENT{Choose next node and token uniformly at random}
			\STATE $\theta_{t+1}^{\token, k} = \theta_t^{\token, k} - \rho_{\comm}  (\theta_t^{\token, k} - \theta_t^{(i)})$ \COMMENT{Token update}
			\STATE $\theta_{t+1}^{(i)} = \theta_t^{(i)} + \rho_{\comm}(\theta_t^{\token, k} - \theta_t^{(i)})$ \COMMENT{Local node update}
			\ELSE 
			\STATE Pick $i \sim \mathcal{U}([n])$ \COMMENT{Choose one node at random}
			\STATE $z_{t+1}^{(i)} = \left(1 - \rho_{\comp}\right)z_t^{(i)} + \rho_{\comp} \theta_t^{(i)}$ \COMMENT{Virtual node update}
			\STATE $\theta_{t+1}^{(i)} = \theta_t^{(i)} - \frac{1}{\tilde{\sigma}}\left(\nabla f_{i}(z_{t+1}^{(i)}) - \nabla f_{i}(z_{t}^{(i)})\right)$ \COMMENT{Local update using $f_{i}$}
			\ENDIF
			\ENDFOR
			\STATE \textbf{return} $\theta_K$
		\end{algorithmic}
	\end{algorithm}
	
	\begin{theorem}[Token algorithm] \label{thm:simple_token}
		For $\varepsilon > 0$, the number of steps required by Algorithm~\ref{algo:simple_token} with a single token ($\ntokens = 1$) and $p_\comp = p_\comm = \frac{1}{2}$ to reach error $\| \theta_t - \theta_\star\|^2 \leq \varepsilon$ is of order:
		\begin{equation}
		\ncomps = O\left(\kappa \log \varepsilon^{-1}\right) \ \text{ and } \ \ncomms = O\left(n \kappa \log \varepsilon^{-1}\right),
		\end{equation}
		where $\ncomps$ is the expected number of gradient steps performed by each node, and $\ncomms$ is the expected number of communication updates (jumps) performed by the token. 
	\end{theorem}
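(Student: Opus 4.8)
The plan is to recognize Algorithm~\ref{algo:simple_token} (with $\ntokens=1$) as an instance of Bregman block coordinate descent (BBCD) applied to the dual of the conceptual-graph reformulation~\eqref{eq:simple_token}, and then to invoke the linear convergence guarantee of BBCD (the adapted version in Appendix~\ref{app:bcd}), reading off the relevant condition numbers directly from the structure of the conceptual graph. Because the regularizers attached to the communication subnodes and to the token are quadratic and each $f_i$ is convex and $L$-smooth, the dual is smooth and strongly convex on the constraint subspace, and the dual-free trick of~\citet{hendrikx2020dual} lets us track the primal iterates $z_t^{(i)}$, $\theta_t^{(i)}$ and $\theta_t^{\token}$ rather than dual variables.

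First I would write the dual of~\eqref{eq:simple_token}, in which each edge of the conceptual graph contributes one block of dual coordinates, and verify that a communication step (token--node gossip) and a computation step (local gradient) correspond exactly to a BBCD update on a communication edge and on a computation edge respectively, with $\rho_\comm$, $\rho_\comp$ and the rate parameter $\eta$ from the initialization line playing the role of the coordinate step sizes. This identification reduces the theorem to checking that the prescribed $\eta$ satisfies the step-size condition of the BBCD theorem and yields a per-iteration contraction $\esp{\mathcal{L}_{t+1}} \le (1 - c\,\eta)\,\mathcal{L}_t$ for a Lyapunov function $\mathcal{L}_t$ dominating $\|\theta_t - \theta_\star\|^2$.

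The crux is computing the two block condition numbers from the graph. For a computation edge, the active block couples $f_i + \frac{\sigmatilde}{2}\|\cdot\|^2$ with a purely quadratic neighbour, so its conditioning is $(L+\sigmatilde)/\sigmatilde = 1 + L/\sigmatilde$, which with $\sigmatilde = \frac{\nnodes}{\nnodes+1}\sigma$ is of order $\kappa$; the induced gradient step $\rho_\comp = \sigmatilde/(\sigmatilde+L)$ therefore contracts each node's local error by a factor $1-\Theta(1/\kappa)$ per visit. For a communication edge both endpoints are quadratic, so the block is perfectly conditioned and the only graph-dependent quantity is the spectral gap of the token--node subgraph, which on the complete graph is a (weighted) star $K_{1,\nnodes}$ of constant algebraic connectivity; consequently communications introduce no $\nnodes$-dependent penalty beyond uniform coordinate sampling. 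Combining these, the binding component is a single node's local subproblem: it needs $\Theta(\kappa)$ gradient updates, but it is selected with probability only $\pcomp/\nnodes$ per iteration, so it receives one update every $\Theta(\nnodes)$ steps, giving a per-iteration contraction of order $1/(\nnodes\kappa)$ and hence $T = \grando{\nnodes\kappa\log\varepsilon^{-1}}$ total iterations; the well-connected star lets the token mix on the same $\Theta(\nnodes)$ timescale, so communication is never the bottleneck.

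Finally, I would translate $T$ into the two reported quantities using $\pcomp = \pcomm = \tfrac12$: the token performs one jump per communication step, giving $\ncomms = T\,\pcomm = \grando{\nnodes\kappa\log\varepsilon^{-1}}$, while each of the $\nnodes$ nodes is selected for a gradient step with probability $\pcomp/\nnodes$, giving $\ncomps = T\,\pcomp/\nnodes = \grando{\kappa\log\varepsilon^{-1}}$. The main obstacle I anticipate is the bookkeeping of the third step: one must verify that the communication subgraph's spectral quantity is genuinely $\Theta(1)$, so that no hidden spectral-gap factor degrades the rate, and that the chosen $\eta$, $\rho_\comm$, $\rho_\comp$ meet the BBCD step-size requirement exactly rather than up to a problem-dependent constant. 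Getting the effective strong convexity $\sigmatilde$ and the edge weights to combine into a clean $\Theta(1/(\nnodes\kappa))$ contraction, while respecting the adaptation to sampling one communication edge at a time (rather than all at once as in~\citet{hendrikx2020dual}), is where the analysis is most delicate.
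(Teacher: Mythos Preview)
Your proposal is correct and follows essentially the same route as the paper: applying the adapted Bregman coordinate descent of Appendix~\ref{app:bcd} to the dual of the conceptual-graph formulation, computing the relative strong convexity from the spectral gap of the star-shaped communication subgraph (indeed $\lambda_{\min}^+ = \ntokens = 1$, independent of $\nnodes$), computing the directional smoothness of computation edges as $1 + L/\sigmatilde$, and reading off the per-node and per-token complexities from the resulting $\Theta(1/(\nnodes\kappa))$ contraction. The paper in fact obtains Theorem~\ref{thm:simple_token} as the $\nfs=1$, $\ntokens=1$ specialization of the general TVR result (Theorem~\ref{thm:tvr_full} in Appendix~\ref{app:simple_token}), but the underlying analysis is exactly what you describe, including the point you flag as delicate---handling communication edges sampled one at a time rather than all at once.
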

	
	\begin{proof}[Proof sketch.]
		This result follows from the guarantees of Bregman Coordinate Descent applied to a dual reformulation of Problem~\eqref{eq:simple_token}. We need to evaluate the relative strong convexity and directional smoothness constants of the problem, and link them with spectral properties of the conceptual graph, as well as the regularity of the local functions. Details can be found in Appendix~\ref{app:simple_token}.
	\end{proof}
	
	\textbf{Implementation.} Algorithm~\ref{algo:simple_token} requires sampling updates uniformly at random over the whole system. This can be implemented in a decentralized fashion by sharing a random seed between all nodes. An alternative is that all nodes wake up and perform updates following a Poisson point process.   %Algorithm~\ref{algo:simple_token} can be implemented in a simple way: 
	
	\textbf{Communication complexity.} The total number of communications is of order $O(n \kappa)$. This matches the complexity of centralized algorithms, in which the server communicates once with each node at each round, and there are $O(\kappa)$ rounds in total. %It is also more efficient than standard gossip algorithms on complete graphs, that would require $O(n^2)$ communications, resulting in a $O(n)$ communication overhead compared to Algorithm~\ref{algo:simple_token} with $\ntokens = 1$. 
	Yet, in terms of time, the $O(n)$ centralized communications can take place in parallel provided there is enough bandwidth, whereas when $\ntokens=1$, the $O(n)$ communications from Algorithm~\ref{algo:simple_token} must be sequential. %Therefore, \emph{the single-token algorithm trades off time for communication efficiency} in this setting. 
	
	\textbf{Computation complexity.} In average, each node performs $O(\kappa)$ local computations, 
	%since there are $n$ nodes in the network and one computation step corresponds to just one node computing a local gradient. In particular, this
	which is the ``right'' complexity for non-accelerated algorithms. Algorithm~\ref{algo:simple_token} is as computationally efficient as standard centralized or decentralized algorithms in this sense. Besides, unlike token exchanges that need to be sequential, nodes can compute local gradient updates in parallel. Therefore, the computation time of Algorithm~\ref{algo:simple_token} matches the centralized time. Instead, when using gradients, Walkman~\citep[Theorem 1]{mao2020walkman} requires a step-size proportional to $L^{-2}$ just for convergence, which would lead to a significantly worse computation complexity of at least $O(L\kappa)$. 
	
	\textbf{Sampling variants.} In Algorithm~\ref{algo:simple_token}, one computation update corresponds to one node performing a gradient update, without any communication involved. Note that by changing the sampling of the dual coordinates, it is possible to design other algorithms algorithms. For instance, we can choose to have nodes perform a local computation only when they receive the token, similarly to Walkman. This would yield a similar rate as the one in Theorem~\ref{thm:simple_token}, but does not allow for local steps. It would thus not be possible for instance to perform a lot of fast communications, while slow computations take place in parallel. %\textbf{(ii)} One in which all nodes perform their computation steps in parallel at the same time. Again, this algorithm would have similar guarantees, but introduces global synchronization steps which are not desirable and go against the spirit of token algorithms. 
	Yet, our framework can also handle this variant (and many others), including with the tricks developed in the next section.

	\section{Extensions of the single token algorithm}
	\label{sec:additional_tricks}
	In the previous section, we have introduced the conceptual graph, and showed how it allows to leverage the existing tools from (dual) decentralized optimization to analyze a simple yet already efficient token algorithm. We now demonstrate the flexibility and generality of this framework by introducing, analyzing and combining three important variants of the token algorithm: multiple tokens, variance reduction, and acceleration.  
	
	\subsection{The case of several tokens}
	When there is a single token walking on the graph, resources are used in an efficient way, and privacy guarantees are strong, but mixing is very slow (up to $n$ times slower in a complete graph for instance). This is due to the fact that there are no parallel communications. One natural solution is to use multiple tokens that walk the graph in parallel. Yet, this is generally harder to analyze and, to the best of our knowledge, there only limited theory on multi-token algorithms, with convergence rates that show actual improvement over single tokens. Our conceptual graph framework allows us to directly extend Theorem~\ref{thm:simple_token} to the case of multiple tokens. 
	
	\begin{figure}
		\centering
		\includegraphics[width=0.5\linewidth]{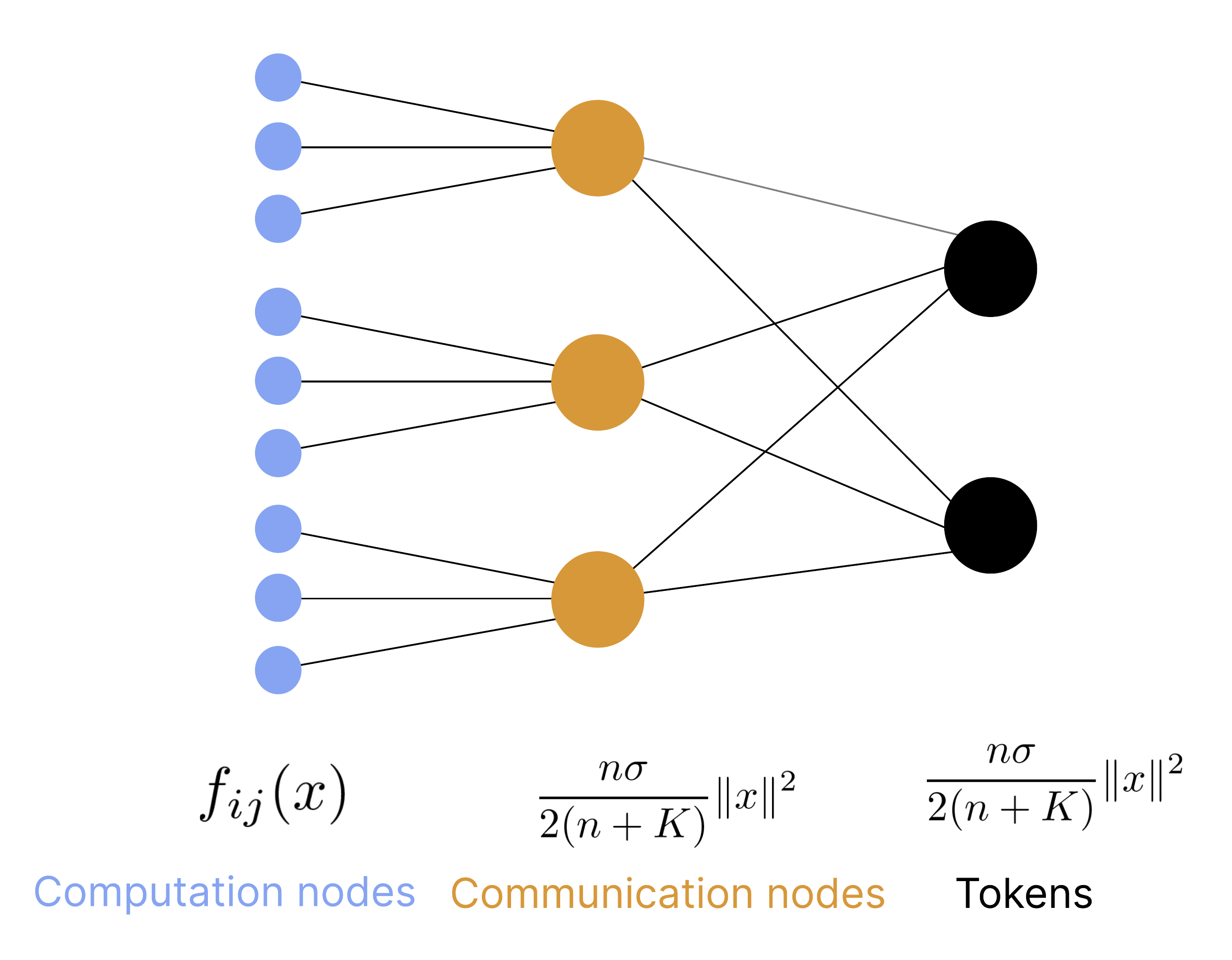}
		\caption{Conceptual graph of size $\nnodes = 3$ with finite-sum local objectives ($\nfs = 3)$ and multiple tokens ($\ntokens = 2$).}
		\label{fig:multiple_tokens}
	\end{figure}
	
	To do so, we build a different conceptual graph. Namely, we add one new node for each token, and link all ``token nodes'' to the actual nodes of the network, as shown in the right part of Figure~\ref{fig:multiple_tokens}. Then, we apply the dual approach to this new conceptual graph, which has a different topology but which we know how to handle. This is how we obtain Algorithm~\ref{algo:simple_token} for the general case of $K \geq 1$. %As a matter of fact, Theorem~\ref{thm:simple_token} was obtained as a special case of the multi-token result, that we present in Theorem~\ref{thm:multi_tokens}.
	
	\begin{theorem}[Multiple tokens] \label{thm:multiple_tokens}
		For $\varepsilon > 0$, the number of steps required by Algorithm~\ref{algo:simple_token} with $1 \leq \ntokens \leq \nnodes$ and $p_\comp = p_\comm = \frac{1}{2}$ to reach error $\| \theta_t^{{\rm token}, k} - \theta_\star\|^2 \leq \varepsilon$ is of order: 
		\begin{equation}
		\ncomps = O\left(\kappa \log \varepsilon^{-1}\right) \text{ and } \ncomms = O\left(\frac{\nnodes}{\ntokens} \kappa \log \varepsilon^{-1}\right),
		\end{equation}
		where $\ncomps$ is the expected number of gradient steps performed by each node, and $\ncomms$ is the number of jumps performed \emph{per token}. In particular, the total communication complexity is the same as in Algorithm~\ref{algo:simple_token}, but now the burden is shared by $\ntokens$ tokens that walk the graph in parallel.
	\end{theorem}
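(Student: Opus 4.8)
The plan is to replicate the reduction underlying Theorem~\ref{thm:simple_token}, now applied to the extended conceptual graph of Figure~\ref{fig:multiple_tokens}. First I would write the dual reformulation of the associated consensus problem: each of the $\ntokens$ token nodes carries a regularizer of weight $\frac{\nnodes\sigma}{2(\nnodes+\ntokens)} = \frac{\sigmatilde}{2}$, each communication node carries the same weight (so that the total regularization across the $\nnodes+\ntokens$ conceptual nodes is still $\nnodes\sigma/2$), and the equality constraints encode the edges of the new graph---every token node linked to every communication node, and each communication node linked to its own computation node. Applying Bregman block coordinate descent to this dual, exactly as in Appendix~\ref{app:bcd}, produces Algorithm~\ref{algo:simple_token} for general $\ntokens$. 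Hence the rate is again governed by the relative strong convexity and directional smoothness constants of this dual objective, and the whole argument reduces to re-evaluating these two constants for the new topology.

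The key steps are then: (i) separate the contribution of the computation edges, which depends only on $L$ and $\sigmatilde$ and is \emph{unchanged} from the single-token case, from that of the communication edges, which depends only on the geometry of the conceptual graph; and (ii) compute the spectral quantity controlling the communication part, namely the smallest nonzero eigenvalue of the weighted Laplacian of the bipartite graph connecting the $\ntokens$ token nodes to the $\nnodes$ communication nodes. Because the computation edges and their sampling are untouched, the per-node computation complexity stays $\ncomps = O(\kappa \log \varepsilon^{-1})$, matching Theorem~\ref{thm:simple_token}; only the communication term changes.

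The main obstacle is controlling this spectral gap and converting it into a per-token jump count. Since each token node is adjacent to all $\nnodes$ communication nodes, the communication subgraph is complete bipartite between the $\ntokens$ token nodes and the $\nnodes$ communication nodes, whose Laplacian eigenvalues are $0$, $\ntokens$ (multiplicity $\nnodes-1$), $\nnodes$ (multiplicity $\ntokens-1$) and $\nnodes+\ntokens$; for $\ntokens \le \nnodes$ the relevant gap is $\ntokens$, a factor $\ntokens$ larger than in the single-token case. This accelerates mixing by exactly $\ntokens$, so the \emph{total} number of communication updates remains $O(\nnodes \kappa \log \varepsilon^{-1})$. Finally, because each communication step draws the active token uniformly from $[\ntokens]$, these total jumps split evenly and each individual token performs only $\ncomms = O(\frac{\nnodes}{\ntokens} \kappa \log \varepsilon^{-1})$ of them. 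One must also check that the step-sizes in Algorithm~\ref{algo:simple_token}---which already carry $\ntokens$ through $\sigmatilde$ and $\alpha$---are exactly those dictated by this spectral computation, and that the single-token error $\|\theta_t^{\token,k}-\theta_\star\|^2$ can be extracted from the aggregate dual suboptimality without losing the $\ntokens$-dependence.
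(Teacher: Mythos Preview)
Your proposal is correct and matches the paper's approach essentially line by line: the paper proves a master result (Theorem~\ref{thm:tvr_full}) for general $\ntokens$ and $\nfs$ by applying Bregman coordinate descent to the dual of the conceptual-graph consensus problem, uses that the communication subgraph is complete bipartite with smallest nonzero Laplacian eigenvalue $\ntokens$ when $\ntokens\le\nnodes$, and then obtains Theorem~\ref{thm:multiple_tokens} as the special case $\nfs=1$, $\delta=0$. The only organizational difference is that the paper specializes from the variance-reduced result rather than generalizing the single-token one, but the spectral computation, the unchanged computation-edge smoothness, and the per-token division are exactly as you describe.
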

	
	%\textbf{Parameter servers.} if the token algorithm can be associated (in a way) with a centralized algorithm, then this multi-token algorithm would resemble the multiple ``parameter server'' scenario, which is quite popular for distributed training.
	
	\textbf{Token interactions.} In the formulation inherited from Figure~\ref{fig:multiple_tokens}, tokens interact with nodes, but not between themselves. We can change the formulation to add interactions between tokens by adding edges between them in the conceptual graph. This would mean that the tokens would mix information when they meet. Yet, this would only marginally increase the connectivity of the conceptual graph, and would thus not speedup the algorithm by more than constant factors. 
	
	\subsection{Variance reduction in the finite sum case}
	We have seen that changing the conceptual graph on which the dual formulation is applied changes the resulting token algorithm. In the previous section, we used this to speed-up communications by having several tokens walk the graph in parallel. We now leverage it to speed-up computations by avoiding local full gradient computations at each node. We now assume that each local objective writes $f_i(x) = \sum_{j=1}^\nfs f_{ij}(x)$. In this case, each full gradient requires $\nfs$ stochastic gradient $\nabla f_{ij}$ computations, so Algorithm~\ref{algo:simple_token} requires $O(\nfs \kappa)$ stochastic gradient in total. Instead, \emph{variance reduction} techniques~\citep{schmidt2017minimizing,johnson2013accelerating,defazio2014saga,shalev2016sdca} only require $\nfs + \kappa_s$ stochastic gradients, where $\kappa_s = \sum_{j=1}^\nfs (1 + L_{ij} / \sigma)$, where $L_{ij}$ is the smoothness of function $f_{ij}$. Although $\nfs \kappa = \kappa_s$ in the worst case (where the $\nabla f_{ij}$ are all orthogonal), $\kappa_s$ is generally smaller than $m\kappa$, leading to the practical superiority of finite-sum methods.
	
	In our case, we introduce the finite-sum aspect by combining the conceptual graph with an augmented graph formulation~\citep{hendrikx2019accelerated,hendrikx2020dual}. Instead of splitting each node into 2 parts, containing respectively $f_i$ and the regularization part, as in Figure~\ref{fig:simple_token}, we split it into a star sub-network, with each $f_{ij}$ linked to the regularization part, as shown if the left part of Figure~\ref{fig:multiple_tokens}. This new conceptual graph leads to a new algorithm, \textbf{T}oken \textbf{V}ariance \textbf{R}eduction (TVR), that has the following convergence guarantees:
	
	\begin{theorem}[Variance Reduction] \label{thm:token_vr}
		For $\varepsilon > 0$, the number of steps required by TVR with $1 \leq \ntokens \leq \nnodes$ and $\pcomp = \frac{\kappa_s}{m - 1 + \kappa_s}$ to reach error $\| \theta_t - \theta_\star\|^2 \leq \varepsilon$ is of order: 
		\begin{equation}
		\ncomps = O\left((\nfs + \kappa_s) \log \varepsilon^{-1}\right) \ \text{ and } \ \ncomms = O\left(\frac{n}{\ntokens} \kappa_s \log \varepsilon^{-1}\right),
		\end{equation}
		where $\ncomps$ is the expected number of stochastic gradient steps performed by each node, and $\ncomms$ the number of jumps performed \emph{per token}. Compared to Theorem~\ref{thm:multiple_tokens}, the computation complexity goes from $\nfs \kappa$ stochastic gradients ($\kappa$ full gradients) to $\nfs + \kappa_s$, which is generally much smaller. 
	\end{theorem}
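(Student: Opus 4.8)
The plan is to follow exactly the pipeline used for Theorems~\ref{thm:simple_token} and~\ref{thm:multiple_tokens}, but applied to the dual of the \emph{finite-sum} conceptual graph of Figure~\ref{fig:multiple_tokens} (the star sub-networks glued to the $\ntokens$ token nodes). Concretely, I would first write down the dual of this augmented problem, whose variables are indexed by the edges of the conceptual graph, and apply the sequential Bregman coordinate descent used for Theorem~\ref{thm:simple_token}. The edges split into two classes: \emph{computation} edges, one per pair $(i,j)$, joining the leaf carrying $f_{ij}$ to the regularization sub-node of node $i$; and \emph{communication} edges joining each regularization sub-node to each of the $\ntokens$ token nodes. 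As in the earlier proofs, the analysis then reduces to (i) a relative strong-convexity constant $\mu$ of the dual, controlled by the smallest positive eigenvalue of the conceptual-graph Laplacian, and (ii) a directional-smoothness constant $M_e$ for each edge $e$, controlled by the regularity of the blocks it touches.

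The second step is to evaluate these constants for this doubly-augmented graph. I expect the computation edge $(i,j)$ to have directional smoothness proportional to $1 + L_{ij}/\sigmatilde$, with $\sigmatilde = \tfrac{\nnodes}{\nnodes+\ntokens}\sigma$ the split regularization weight, exactly as in the variance-reduced augmented-graph analysis of~\citet{hendrikx2020dual}; summing over the $\nfs$ leaves of a node produces $\sum_j (1+L_{ij}/\sigmatilde)$, which is where $\kappa_s$ enters. The key point is to sample the computation edges with \emph{importance weights} proportional to $M_{(i,j)} \propto 1 + L_{ij}/\sigmatilde$ rather than uniformly, so that the aggregate computation cost is governed by the \emph{sum} $\kappa_s = \sum_j (1+L_{ij}/\sigma)$ and not by the worst case $\nfs \max_j (1+L_{ij}/\sigma)$. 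The communication edges, on the other hand, only see the quadratic regularizers, so their smoothness is governed by $\sigmatilde$ and $\mu$ by the connectivity of the bipartite node--token layer; since the $\ntokens$ token nodes are symmetric, I would reuse the spectral estimate of Theorem~\ref{thm:multiple_tokens} to show that $\mu$ and the per-edge communication smoothness stay controlled uniformly for $1 \leq \ntokens \leq \nnodes$.

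With the two edge classes in hand, the randomized coordinate descent rate is $1-\rho$ with $\rho = \mu/\sum_e M_e$ under the sampling $p_e \propto M_e$, and the probability mass assigned to computation versus communication is exactly $\pcomp = (\sum_{\text{comp}} M_e)/(\sum_e M_e)$. Evaluating this ratio yields $\pcomp = \kappa_s/(\nfs-1+\kappa_s)$, the value announced in the statement, which equalizes the contraction contributions of the two classes. This gives a total iteration count $T = O(\sum_e M_e/\mu) = O(\nnodes\,\kappa_s\,\log\varepsilon^{-1})$, generalizing the $O(\nnodes\kappa)$ of Theorem~\ref{thm:multiple_tokens} by the substitution $\kappa \to \kappa_s$. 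The two complexities then follow by multiplying $T$ by the per-node computation frequency $\pcomp/\nnodes$ and the per-token communication frequency $\pcomm/\ntokens$, using $\kappa_s \geq \nfs$ so that $\pcomp = \Theta(1)$ and $\nfs + \kappa_s = \Theta(\kappa_s)$: one gets $\ncomps = \pcomp T/\nnodes = O((\nfs+\kappa_s)\log\varepsilon^{-1})$ stochastic gradients per node and $\ncomms = \pcomm T/\ntokens = O(\tfrac{\nnodes}{\ntokens}\kappa_s\log\varepsilon^{-1})$ jumps per token.

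The main obstacle is the second step: carrying out the spectral computation for this doubly-augmented graph — finite-sum stars glued to a symmetric node--token layer — and, above all, getting the directional-smoothness bounds tight enough that importance sampling produces the \emph{sum} $\kappa_s$ rather than a worst-case $\nfs\,\kappa$ factor. A secondary difficulty is verifying that the sequential (one token at a time) variant of Bregman coordinate descent still enjoys the clean $\mu/\sum_e M_e$ rate on this larger graph, and that all constants remain uniform over the full range $1 \leq \ntokens \leq \nnodes$ and $\nfs \geq 2$.
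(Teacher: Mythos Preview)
Your proposal is correct and follows essentially the same route as the paper: Theorem~\ref{thm:token_vr} is obtained as a corollary of a general convergence result for Bregman coordinate descent (Theorem~\ref{thm:bcd}) applied to the dual of the augmented conceptual graph, with the relative strong-convexity constant coming from the spectrum of the bipartite node--token layer ($\lambda_{\min}^+=\ntokens$) and the directional-smoothness constants $\smooth_{ij}\propto 1+L_{ij}/\sigmatilde$ handled by importance sampling $p_{ij}\propto 1+L_{ij}/\sigmatilde$, exactly as you outline. The only refinement is that the paper's rate has two terms (the $\eta\mu$ contraction and a separate $1-R_p$ term from the Lyapunov function), and $\pcomm,\pcomp$ are chosen to equalize the two \emph{step-size constraints} $\eta\leq \pcomm\sigmatilde/(2\nnodes\ntokens)$ and $\eta\leq \pcomp\sigmatilde\kappa_s/(2\nnodes\ntokens(\nfs-1+\kappa_s))$ rather than a single $\mu/\sum_e M_e$ ratio; this yields the same orders but explains why the balancing condition is on $\pcomm/\pcomp$ rather than directly on $\pcomp$.
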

	
	TVR performs the same communication steps as Algorithm~\ref{algo:simple_token}, with slightly different computation steps, adapted for the stochastic case: there are now $\nfs$ functions $f_{ij}$ (and so parameters $z^{(ij)}$), instead of just one. The full algorithm and the proof of Theorem~\ref{thm:token_vr} are detailed in Appendix~\ref{app:tvr}.
	
	\subsection{Acceleration}
	We have seen that the conceptual graph view of token algorithms allows to naturally extend the simple single-token batch algorithm to a multi-token variance-reduced algorithm. We now show that by applying a different optimization algorithm to the same dual formulation, we obtain an accelerated algorithm from the same framework. We refer the reader to Appendix~\ref{app:acceleration} for details and derivations.
	
	\begin{theorem}[Token Accelerated Variance-Reduced]\label{thm:tavr}
		For $\varepsilon > 0$, the number of steps required by Accelerated TVR with $1 \leq \ntokens \leq \nnodes$ to reach error $\| \theta_t^{{\rm token}, k} - \theta_\star\|^2 \leq \varepsilon$ is of order:
		\begin{equation}
		\ncomps = O\left((\nfs + \sqrt{\nfs\kappa_s}) \log \varepsilon^{-1}\right) \text{ and } \ncomms = O\left(\frac{\nnodes}{\ntokens} \sqrt{\kappa_s} \log \varepsilon^{-1}\right)
		\end{equation}
		In particular, the dependences on the objective regularity are replaced by their accelerated versions. This matches the optimal complexities from~\citet{hendrikx2020optimal}. 
	\end{theorem}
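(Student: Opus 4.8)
The plan is to reuse \emph{verbatim} the dual reformulation and conceptual graph constructed for TVR in Theorem~\ref{thm:token_vr} — the augmented star sub-networks encoding the finite-sum structure, together with the $\ntokens$ extra token nodes — but to replace the (Bregman) block coordinate descent solver by the Accelerated Proximal Coordinate Descent (APCD) of~\citet{lin2015accelerated} applied to the same dual objective. Concretely, I would first write the dual of Problem~\eqref{eq:simple_token} in the finite-sum, multi-token conceptual graph. As in the dual decentralized approach, this dual splits into (i) a coordinate-separable part built from the Fenchel conjugates $f_{ij}^*$ — one block per data point, the \emph{computation} coordinates, whose per-block smoothness is controlled by $L_{ij}$ — and (ii) a quadratic coupling term driven by the Laplacian of the conceptual graph, which I would treat as the proximable/regularizing part — the \emph{communication} coordinates, one block per token-node edge. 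The dual strong-convexity modulus $\mu$ is then supplied by the $\sigmatilde$-regularization.

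Second, I would instantiate the APCD rate. The general guarantee of~\citet{lin2015accelerated} states that, with block sampling probabilities $p_j$ and per-block smoothness constants $R_j^2$, reaching dual accuracy $\varepsilon$ costs $O\big(\sum_j \sqrt{R_j^2/(p_j\,\mu)}\,\log\varepsilon^{-1}\big)$ coordinate updates. The two ingredients to plug in are exactly the spectral and regularity quantities already extracted in the proofs of Theorems~\ref{thm:multiple_tokens} and~\ref{thm:token_vr}: the smallest nonzero eigenvalue of the conceptual-graph Laplacian, which for the multi-token augmented graph scales like $\ntokens/\nnodes$ and controls the communication blocks, and the local conditioning $\kappa_s$, which controls the computation blocks. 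The square root appearing in the APCD bound is precisely what turns the $\kappa_s$ and $\nfs\kappa_s$ factors of Theorem~\ref{thm:token_vr} into their accelerated counterparts $\sqrt{\kappa_s}$ and $\sqrt{\nfs\kappa_s}$.

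Third, I would translate the total number of accelerated coordinate iterations back into the two complexity measures by separating the sum $\sum_j \sqrt{R_j^2/(p_j\mu)}$ into its computation and communication contributions and optimizing the sampling probabilities $\pcomp,\pcomm$ to balance them (this is where the explicit $\pcomp$ of the non-accelerated case is replaced by its accelerated analogue). Dividing the communication term by $\ntokens$, to reflect that the $\ntokens$ tokens update disjoint edges in parallel, and expressing the computation term per node, yields $\ncomps = O\big((\nfs+\sqrt{\nfs\kappa_s})\log\varepsilon^{-1}\big)$ and $\ncomms = O\big((\nnodes/\ntokens)\sqrt{\kappa_s}\log\varepsilon^{-1}\big)$; the additive $\nfs$ in $\ncomps$ is the usual cost of one pass over the local data to initialize the variance-reduced estimates. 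A final step checks that the $\ell_2$ error on $\theta_t^{{\rm token},k}$ is controlled by the dual suboptimality via primal strong convexity, and that the rate coincides with the lower bounds of~\citet{hendrikx2020optimal}, establishing optimality.

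The main obstacle I anticipate is not the spectral bookkeeping — which can be imported from Theorems~\ref{thm:multiple_tokens}--\ref{thm:token_vr} — but verifying that APCD can genuinely be run in a token fashion, i.e. that its proximal step on the Laplacian/communication block and its momentum/extrapolation updates remain local and respect the random-walk structure (one token-node edge touched per communication step). Accelerated coordinate methods maintain auxiliary momentum sequences that couple coordinates across iterations, so I would need to show, as in~\citet{hendrikx2019accelerated,hendrikx2020dual}, that with the chosen non-uniform sampling these sequences admit an efficient decentralized implementation and that the dual-free reparametrization still yields closed-form primal updates. Reconciling this with the sequential nature of token jumps is the delicate part.
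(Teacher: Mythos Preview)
Your proposal is correct and follows essentially the same route as the paper: apply Accelerated Proximal Coordinate Descent to the same dual formulation on the multi-token conceptual graph, plug in the spectral constants of that graph together with the local regularity $\kappa_s$, balance $\pcomm$ and $\pcomp$ so that communication and computation rates match, and read off the per-node and per-token complexities. The only cosmetic difference is that the paper invokes the packaged ADFS rate~\citep[Theorem~1]{hendrikx2019accelerated} directly (a $\min_{k\ell}$ formulation giving $\rho^2$) rather than going back to the raw APCD bound of~\citet{lin2015accelerated}, and your stated eigenvalue scaling $\ntokens/\nnodes$ should really be $\lambda_{\min}^+(A_\comm^\top A_\comm)=\ntokens$ (or $\lambda_{\min}^+(A^\top\Sigma^{-1}A)=\ntokens/(2\sigmatilde\kappa_s)$ once the full weighting is included), but this does not change the argument or the final bounds.
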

	
	\textbf{Proximal oracles.} This algorithm is based on Accelerated Proximal Coordinate Gradient~\citep{lin2015accelerated,hendrikx2019accelerated}, and thus uses proximal operators of the functions $f_{ij}$ instead of gradients. Yet, this is also the case of Walkman, and it is quite cheap in case the $f_{ij}$ are generalized linear model of the form $f_{ij}(\theta) = \ell(x_{ij}^\top \theta)$, where $\ell: \R \mapsto \R$. 
	
	\textbf{Continuized framework.} TAVR requires each node to perform local convex combinations at each step. This introduces global synchronization constraints, that we can get rid of using a continuized version of the algorithm~\citep{even2021continuized}.  
	
	\subsection{General graphs}
	\label{app:general_graphs}
	
	\label{sec:general_graphs}
	All the results presented so far are for the complete communication graph, meaning that the token can directly jump from any node to any other, allowing to prove strong convergence rates. We now show how to extend these results to general graphs. To do so, we analyze a slightly different communication procedure: instead of just one jump, each token performs $\nbjumps$ jumps before averaging with the node it lands at according to Algorithm~\ref{algo:simple_token} (lines 6-7). If enough steps are taken, and the underlying Markov Chain is irreducible and aperiodic, then the probability that the token lands at node $j$ from node $i$ after $\nbjumps$ steps is approximately equal to $\pi_\star(j)$, where $\pi_\star$ is the stationary distribution of a random walk over the communication graph. In particular, by allowing multiple steps before performing the actual communication update, all nodes can be reached from all nodes, as in the complete graph case. The actual sampling probabilities $\ptilde$ depend on the node at which the token is at time $t$. Yet, if $\ptilde$ is close enough to $\pi_\star(i)$, we can just adapt the step-sizes in coordinate descent, and obtain convergence regardless. Details can be found in Appendix~\ref{app:general_graphs}
	
	\begin{theorem}
		Assume that matrix $W \in \R^{\nnodes \times \nnodes}$ defining the token transitions is such that for any $\pi_0$, $\|W^t \pi_0 - \pi_\star\|_\infty \leq C (1 - \gamma)^t$. Then, if the token jumps $O(\gamma^{-1} \log(C / \eta \mu))$ times before performing an averaging step, the communication complexity of Algorithm~\ref{algo:simple_token} (ignoring log factors) is $O(\frac{n\kappa}{\gamma} \log \varepsilon^{-1})$. Theorem~\ref{thm:token_vr} can be adapated in the same way.
	\end{theorem}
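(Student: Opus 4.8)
The plan is to reduce the general-graph setting back to the complete-graph analysis underlying Theorem~\ref{thm:simple_token}, treating the multi-jump averaging step as a mechanism that manufactures an (approximately) prescribed sampling distribution over communication edges. Recall that the complete-graph proof runs Bregman block coordinate descent on the dual of Problem~\eqref{eq:simple_token}, where the edge to update is drawn from a fixed distribution; in the complete graph this distribution is uniform because the token may jump anywhere. In a general graph a single jump only allows transitions along existing edges, so the conditional law of the landing node depends on where the token currently sits. The key idea is that after $\nbjumps$ consecutive jumps this conditional law is the $\nbjumps$-step transition distribution $W^{\nbjumps}\pi_0$, which by the mixing assumption is uniformly close to $\pi_\star$ irrespective of the starting node. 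Thus, performing only one averaging update (Algorithm~\ref{algo:simple_token}, lines 6--7) every $\nbjumps$ jumps effectively samples the updated edge from a distribution within a small perturbation of $\pi_\star$, recovering a complete-graph-like sampling for the coordinate descent.

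First I would fix the required sampling accuracy $\delta$ and translate it into a number of jumps. Using $\|W^t \pi_0 - \pi_\star\|_\infty \leq C(1-\gamma)^t$ together with $-\log(1-\gamma) \geq \gamma$, the inequality $C(1-\gamma)^{\nbjumps} \leq \delta$ holds as soon as $\nbjumps \geq \gamma^{-1}\log(C/\delta)$. Choosing $\delta$ of order $\eta\mu$ --- the scale at which the residual sampling error is negligible compared with the per-step contraction governed by the step-size $\eta$ and the strong-convexity parameter $\mu$ --- gives exactly $\nbjumps = O(\gamma^{-1}\log(C/\eta\mu))$ as in the statement. Because the bound is uniform over all $\pi_0$, the conditional distribution of the averaged edge at each averaging step lies within $\delta$ of $\pi_\star$ regardless of the token's history, which is what lets us ignore the Markov-chain dependence between successive averaging steps.

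The main obstacle is the second ingredient: showing that Bregman coordinate descent still contracts at the nominal rate when the coordinate is sampled from probabilities $\ptilde$ satisfying $|\ptilde - \pi_\star(i)| \leq \delta$ for all $i$ and $t$. I would revisit the one-step descent inequality from the complete-graph analysis, in which the expected decrease of the Lyapunov function is a $\pi_\star$-weighted sum of per-coordinate progress terms, and replace the weights $\pi_\star(i)$ by the actual $\ptilde$. Bounding the discrepancy by $\delta$ times the (bounded) per-coordinate terms shows the contraction factor degrades by at most a lower-order additive term; absorbing this into a slightly shrunk step-size $\eta$ restores a contraction of the same order, at the cost of the accuracy requirement $\delta = O(\eta\mu)$ already imposed above. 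With robustness established, the iteration complexity measured in averaging steps matches the complete-graph communication count $O(n\kappa\log\varepsilon^{-1})$ of Theorem~\ref{thm:simple_token}. Finally I would multiply by the per-averaging cost: each averaging step now requires $\nbjumps = O(\gamma^{-1}\log(C/\eta\mu))$ jumps, so the total number of jumps is $O(n\kappa\log\varepsilon^{-1}) \times O(\gamma^{-1}) = O\big(\tfrac{n\kappa}{\gamma}\log\varepsilon^{-1}\big)$ after dropping the logarithmic factor coming from $\nbjumps$. The identical reduction applied to the variance-reduced sampling proves the stated adaptation of Theorem~\ref{thm:token_vr}.
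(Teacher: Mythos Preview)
Your proposal is correct and follows essentially the same route as the paper: use mixing to guarantee that after $O(\gamma^{-1}\log(C/\eta\mu))$ jumps the landing distribution is within $O(\eta\mu)$ of $\pi_\star$, establish that Bregman coordinate descent still contracts (at half the nominal rate) under such a sampling perturbation, and multiply the resulting complete-graph iteration count by $\nbjumps$. The only difference is organizational: the paper has already baked the perturbed-sampling robustness into its Bregman coordinate descent theorem (via the parameters $\delta_{i,t}$, $\Delta$, and the inflated targets $p_i=(\pi_\star)_i+\tfrac{\eta\mu}{4}\geq \ptilde$), so the general-graph proof is a two-line instantiation rather than the inline re-derivation you sketch; your ``bound the discrepancy by $\delta$ times the per-coordinate terms'' step would need the same care (monotonicity of the iterates, tracking multiplicative $(1+\delta_{i,t})$ factors) that the paper isolates in that theorem.
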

	
	Note that these rates are for uniform stationary distribution $\pi_\star(i) = \nnodes^{-1}$ for all $i\in [n]$. Rates for non-uniform stationary distributions can also be obtained, and would depend on $\max_i \pi_i / L_i$, so that the algorithm is still fast if nodes that are visited less frequently have better smoothness.
	
	\textbf{Comparison with existing decentralized algorithms.} Note that decentralized algorithms such as EXTRA~\citep{shi2015extra,li2020revisiting} require a total of $O(E(\kappa + \gamma^{-1}) \log \varepsilon^{-1})$ communications, where $E$ is the number of edges in the graph. In particular, our token algorithm is more communication efficient even in general graphs as long as either $\kappa$ or $\gamma^{-1}$ is small compared to $E/n$, the average node degree. Note that this complexity is also better than that of Walkman~\citep{mao2020walkman}, which depends on $\gamma^2$ instead of $\gamma$, besides proving linear convergence only in limited settings. %We conjecture that this is due to the fact that they use a similar proof technique but without dropping iterations, so that they need to reduce the step-size accordingly by a factor of $\gamma$ to ensure that updates before mixing happen do not perturb learning too much.  
	
	\textbf{Directed graphs time-varying graphs.} With the communication-skipping variant, all that matters is that the probability of being at node $j$ after taking $O(\gamma^{-1} \log(\eta\mu))$ steps from node $i$ is close to some $\pi_\star(j) > 0$. In particular, this does not imply the reversibility of the Markov Chain used for communications (and so can be used for directed graphs), or even its stationarity.
	
	\section{Experiments}
	In the previous section, we leveraged the conceptual graph framework to design and analyze several (multi-)token algorithms. We now illustrate their differences in Figure~\ref{fig:token_exp}. We use a step-size $\beta = 1/L$ for Walkman, which is much higher than the one from~\citet{mao2020walkman}. The communication complexity is the total number of token jumps (regardless of the number of tokens), and the computation complexity is the total number of gradients (on single $f_{ij}$) computed. Time is obtained by setting $\tau_\comp = 1$ for computing one individual gradient and $\tau_\comm = 10^3$ for one communication, so that one communication is faster than computing one full local gradient. For gradient descent (GD), we present 2 variants of the allreduce protocol: \emph{all-to-all} , which has a high number of communications $\nnodes(\nnodes - 1)$ but small time ($1$) per step, and \emph{ring} (sequentially averaging over a directed ring), which has a small number of communication($2\nnodes$) but high time  ($2\nnodes$) per step. Note that a fully centralized implementation would get both small communication complexity ($2\nnodes$) and time ($2$). \emph{Token} and \emph{TVR} respectively refer to the algorithms analyzed in Theorems~\ref{thm:multiple_tokens} and~\ref{thm:token_vr}. EXTRA is a standard decentralized algorithm~\citep{shi2015extra}. Additional details can be found in Appendix~\ref{app:experiments}.
	
	\begin{figure}
		
		\centering
		\includegraphics[width=0.45\linewidth]{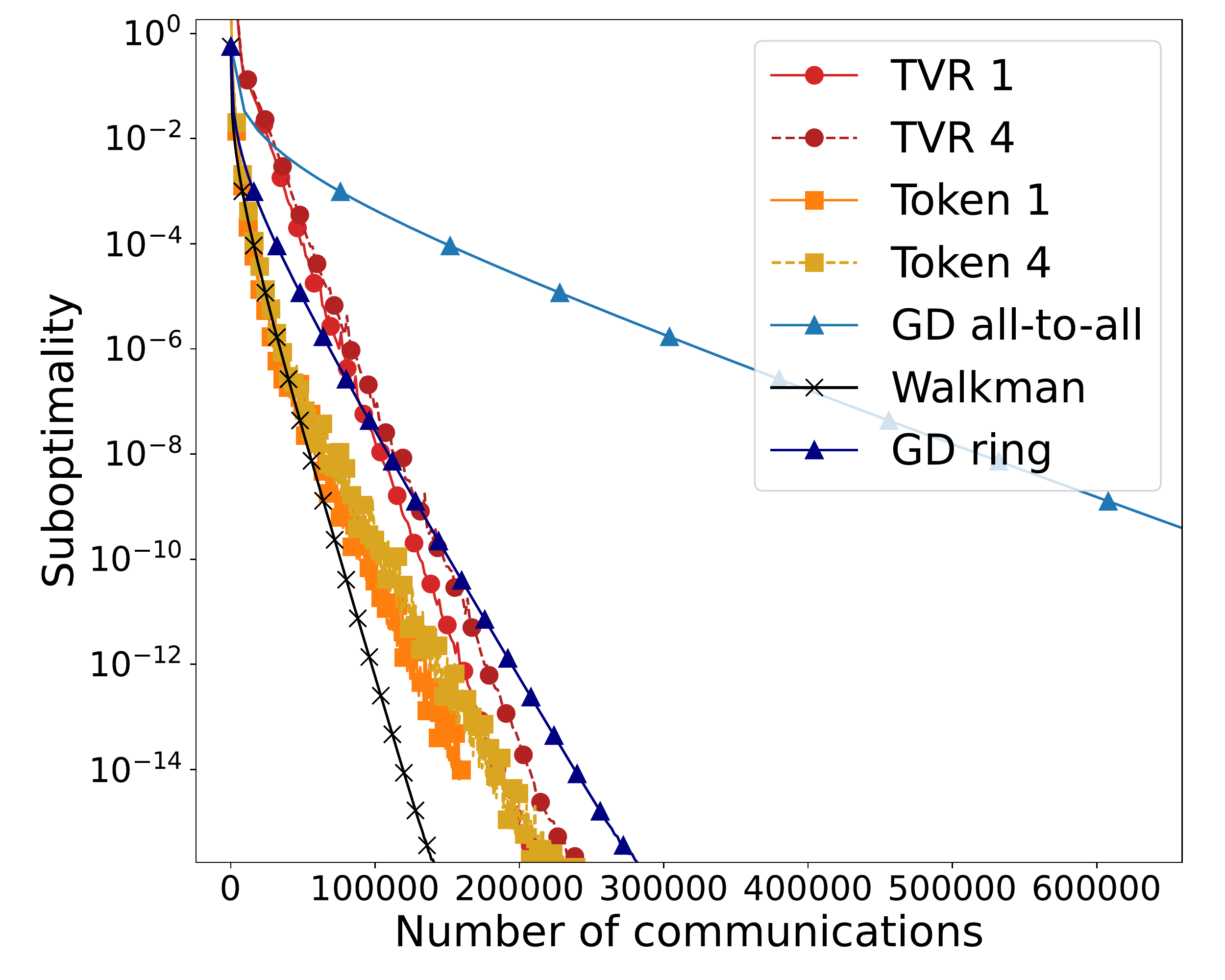}
		\includegraphics[width=0.45\linewidth]{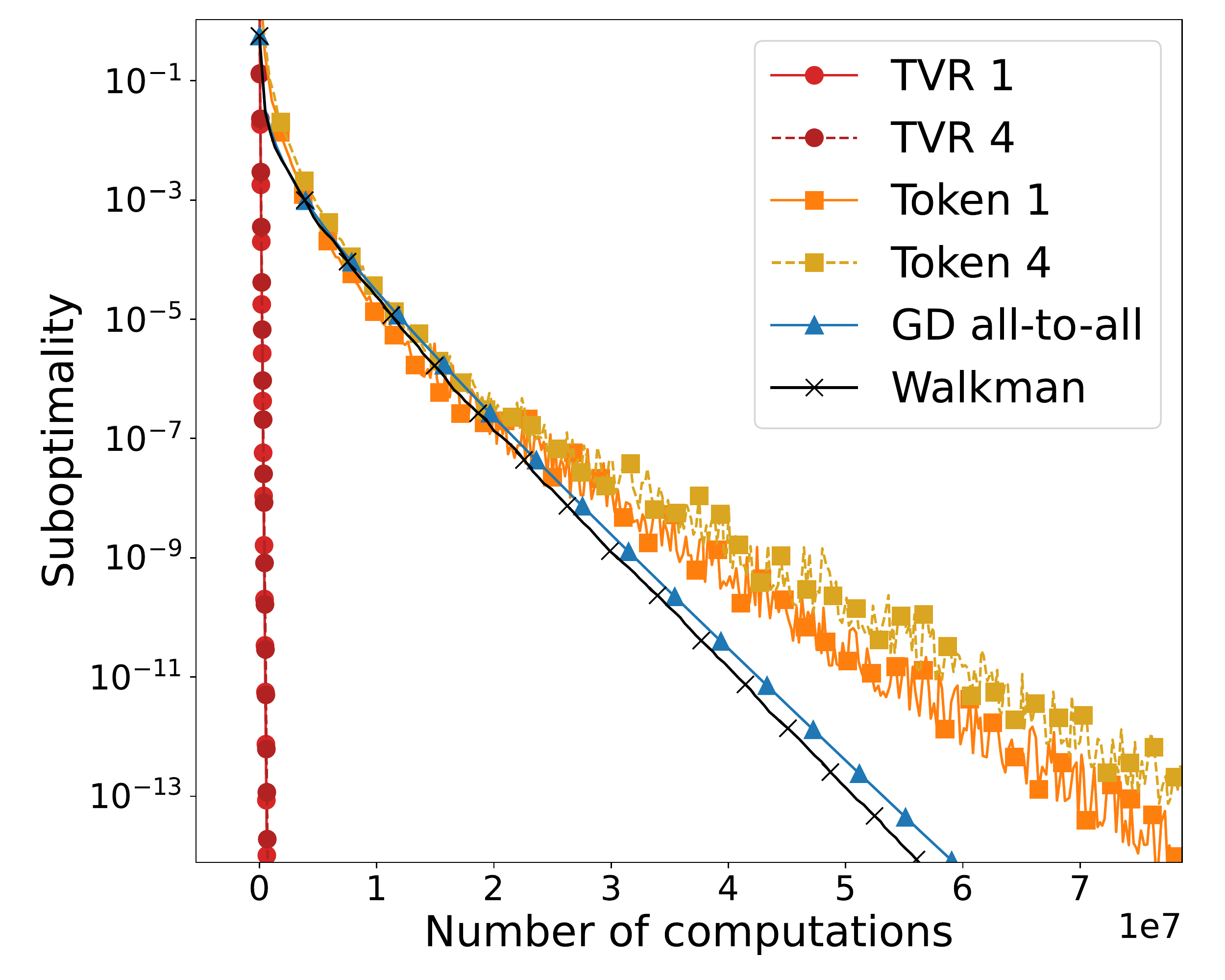}
		
		\centering{(a) Communication (left) and computation (right) complexities for the complete graph}
		
		\centering
		\includegraphics[width=0.45\linewidth]{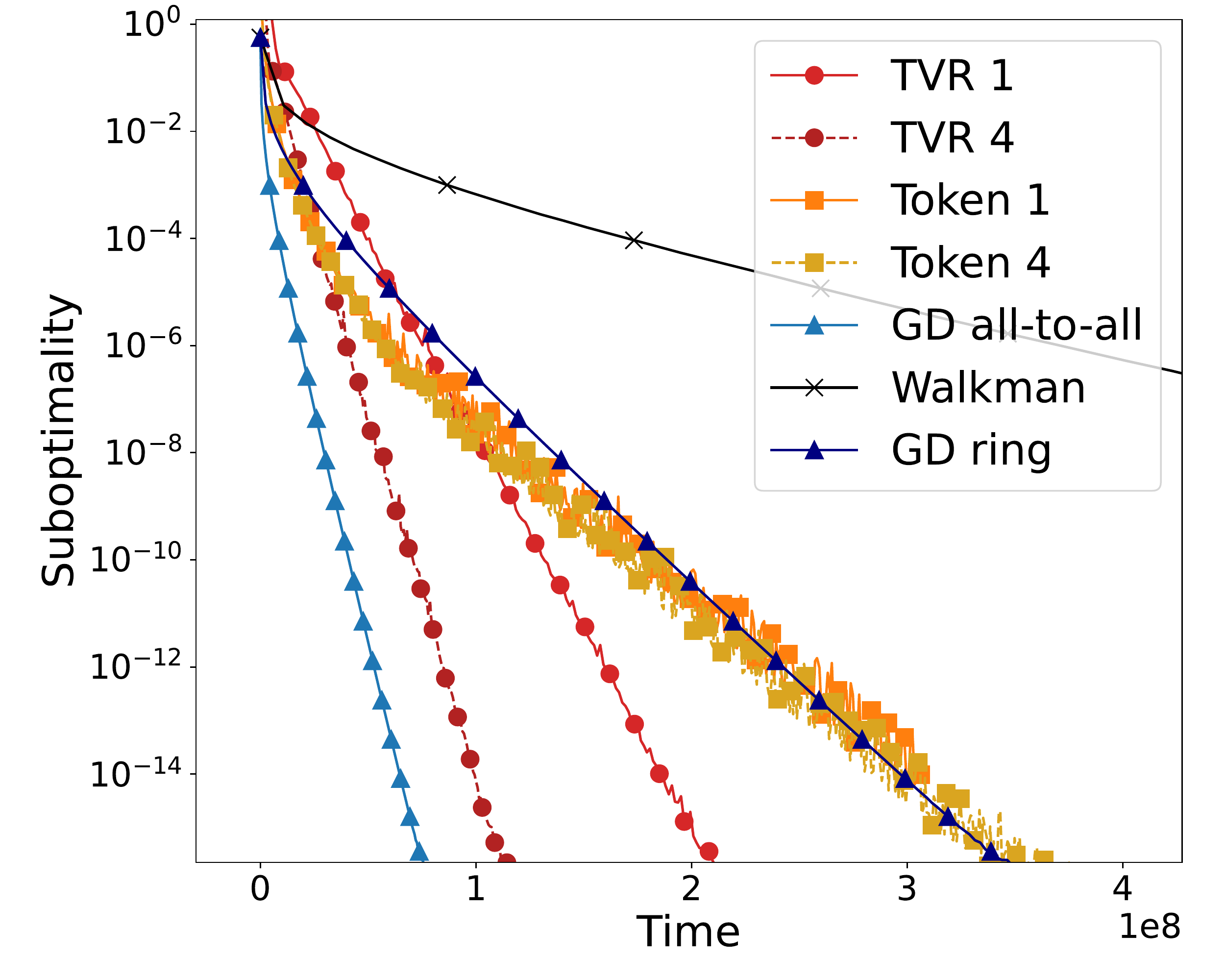}
		\includegraphics[width=0.45\linewidth]{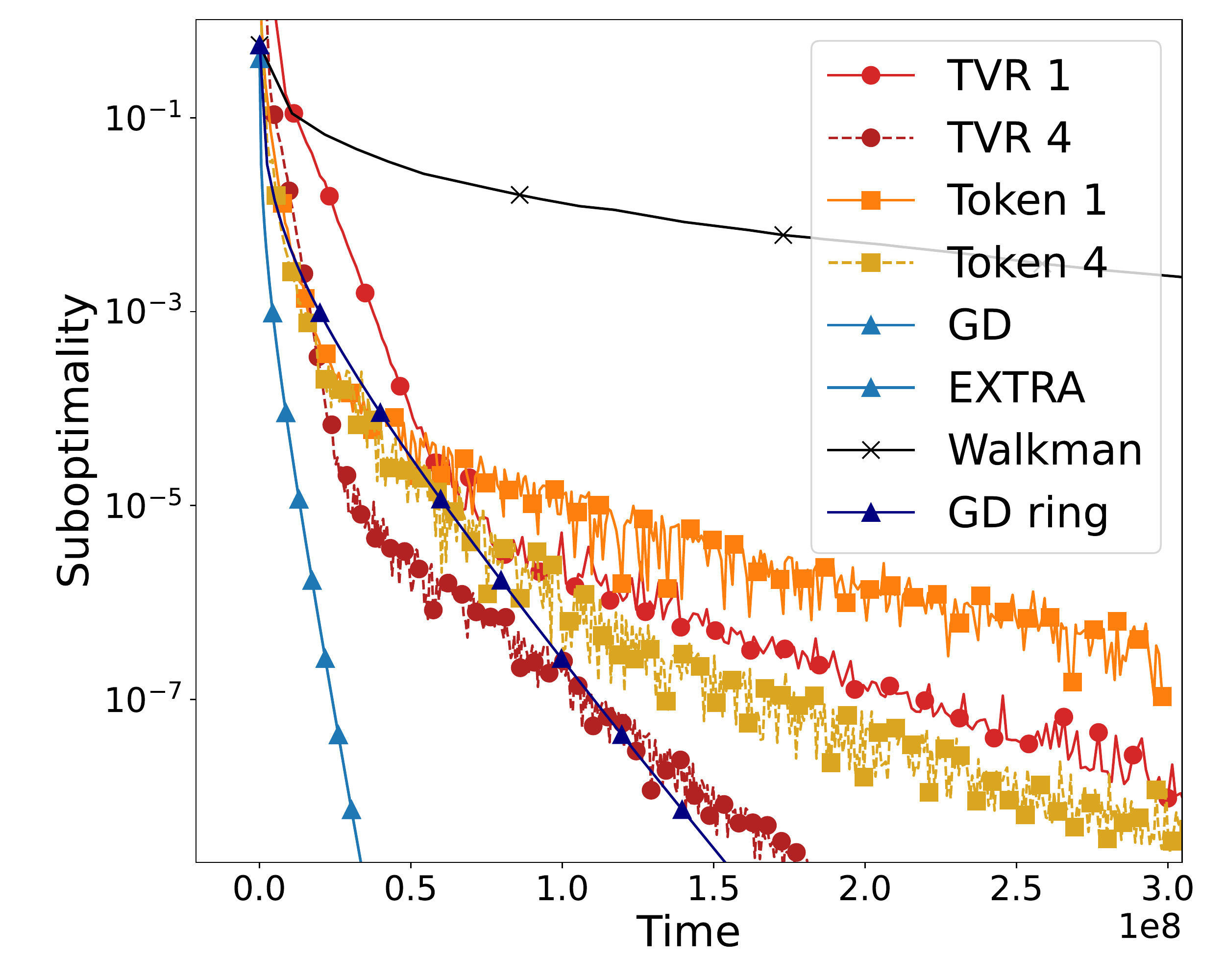}
		
		\centering{(b) Time complexities for the complete (left) and ring (right) graphs}
		
		\caption{Convergence results for a logistic regression task on the RCV1 dataset~\citep{lewis2004rcv1} ($d=47236$), with $\nnodes=20$ nodes, and $\nfs=9841$ samples per node.}
		\label{fig:token_exp} \vspace{-15pt}
	\end{figure}
	
	For the complete graph, all token algorithms have similar communication complexity. This is consistent with our theory, and confirms that using multiple token does not hurt efficiency. We also confirm that the communication complexity of token algorithms is lower than that of all-to-all gradient descent (GD), and comparable to that of the efficient ring GD. Similarly, all batch algorithms have similar computation complexities, with GD and Walkman performing slightly better. TVR is more computationally efficient thanks to the stochastic variance-reduced updates. 
	
	In terms of time, the fastest algorithm is all-to-all gradient descent, since it performs all communications in parallel. Algorithm~\ref{algo:simple_token} is as fast as ring GD, since both algorithms require $O(\nnodes)$ sequential communications, but perform computations in parallel. Walkman is the slowest algorithm in this setting, because it needs to perform both communications and computations sequentially (since it does not use local updates). Note that it would be as fast as the single token algorithm for $\tau_\comm \geq \nfs$. TVR is faster than Algorithm~\ref{algo:simple_token} thanks to variance reduction. We also see that using several tokens speeds up TVR (since the communication time dominates), but not Algorithm~\ref{algo:simple_token} (for which communication and computation times are of the same order). 
	
	For the ring graph, we find that the same token algorithms as for the complete graph are stable (consistently across a wide range of $\nfs$ and $\nnodes$), and so we do not use the skip variant presented in Section~\ref{sec:general_graphs}. We observe similar results to the complete graph case, although using multiple tokens now accelerates Algorithm~\ref{algo:simple_token} since they allow to compensate for the worse graph connectivity. GD and EXTRA have the same rate in this case (since $\kappa > \gamma^{-1})$, and their curves are thus almost indistinguishable. 
	
	\section{Conclusion}
	We have presented a general framework for analyzing token algorithms, and derived several variants such as variance-reduction and acceleration from it. All these token algorithms are competitive with their centralized counterparts in terms of computation and communication complexities. Multiple tokens can be used to increase the level of parallelism, and reduce the communication time. 
	
	We have also discussed a reduction from the general case to the complete communication graph case, in which our results are proven. We claim this reduction leads to efficient algorithms for general graphs, although these algorithms seem to waste communications. An important research direction would be to formalize these claims, and directly analyze versions of these algorithms in which tokens exchange information with all the nodes they visit. This would involve tight analyses of coordinate descent with Markov Chain sampling.

\section{Acknowledgements}
This work was supported by the Swiss National Foundation (SNF) project $200020\_200342$.
	
	\bibliographystyle{plainnat}
	\bibliography{biblio}

	%%%%%%%%%%%%%%%%%%%%%%%%%%%%%%%%%%%%%%%%%%%%%%%%%%%%%%%%%%%%
	
	\newpage 
	
	\appendix
	
	\section{Revisiting Bregman Coordinate Descent}
	\label{app:bcd}
	The algorithmic core for the non-accelerated methods is based on Bregman Coordinate descent. Yet, the existing theory from~\citet{hendrikx2020dual} was not satisfactory, as it did not allow to consider stochastic communications: the communication block was sampled all at once, which is not possible in the token approach. 
	
	Similarly, in general graphs, we do not know the exact sampling probability $\ptilde$, but an upper bound $p_i$ of it. We thus changed the coordinate descent algorithm to also take that into account.  
	
	We adapt the result from~\citet{hendrikx2020dual} to tackle these two problems in this section. First of all, for two vectors $x,y \in \R^d$, define the Bregman divergence:
	\begin{equation} \label{eq:bd}
	D_h(x,y) = h(x) - h(y) - \nabla h(y)^\top(x - y).
	\end{equation}
	In order not to worry about further regularity assumptions, we assume throughout this paper that all the functions we consider are twice continuously differentiable and strictly convex on ${\rm dom} \, h$, and that $\nabla h
	(x) = \min_y h(y) - x^\top y$ is uniquely defined. This is not very restrictive for typical machine learning objectives. The Bregman divergence has some interesting properties. In particular, $D_h(x,y) \geq 0$ for all $x,y\in \R^d$ as soon as $h$ is convex. For $i \in [d]$, we denote $e_i \in \R^d$ the unit vector corresponding to coordinate $i$. Consider the following Bregman coordinate descent algorithm, in which the iterates are given by:
	\begin{equation}\label{eq:breg_cd}
	x_{t+1} = \arg\min_x \left\{V_{i,t}(x) \hat{=} \frac{\lr}{p_i} \nabla_i f(x_t)^\top \projA x + D_h(x, x_t) \right\},
	\end{equation}
	where $\nabla_i f(x_t) = e_i e_i^\top \nabla f(x_t)$, and $A^\dagger A$ is some projection matrix, which is such that $\projA \nabla f(x) = \nabla f(x)$ for all $x\in \R^d$. An equivalent way to write these iterations is: 
	\begin{equation}\label{eq:breg_cd_eq}
	\nabla h(x_{t+1}) = \nabla h(x_t) - \frac{\lr}{p_i}  \projA \nabla_i f(x_t)
	\end{equation}
	Although we use some $p_i$ for the learning rate, we assume that coordinates are sampled according to another distribution, namely, $\ptilde$. In order to make the training stable, we assume that for all $i,t$, there exist $\deltaproba > 0$ which are such that for all $i$, 
	\begin{equation}
	\ptilde (1 + \deltaproba) = p_i. 
	\end{equation}
	In particular, this means that the $p_i$ are not a proper probability distribution, since they don't sum to one. We denote $\Delta = \sum_{i=1}^d  \ptilde \deltaproba$, which is such that $1 + \Delta$ is normalizing factor of the $p_i$. Similarly, we denote $\delta = \min_{i,t} \deltaproba$. We denote $R_i = (A^\dagger A)_{ii}$ and $R_p = \min_i {R_i^{-1}}{\ptilde}$. We make the following assumptions on $f$ and $h$.
	
	\begin{assumption}[Regularity assumptions] \label{ass:bcd}
		Function $f$ is $\mu$-relatively strongly convex and $L_i$-relatively smooth in the direction $i$ with respect to $h$, meaning that for all $x,y \in \R^d$, and some $v_i$ supported by $e_i$:
		\begin{equation}
		\mu D_h(x,y) \leq D_f(x,y), \ \text{ and } \ D_f(x + v_i, x) \leq L_i D_h(x + v_i, x).
		\end{equation}
		We also make some technical assumptions, that are verified for our problem. In particular, we assume that $h$ and $\projA$ are such that:
		\begin{equation} \label{eq:separability_assumption}
		\nabla_i f(x_t)^\top \projA (x_t - x_{t+1}^{(i)}) = R_i \nabla f(x_t)^\top (x_t - x_{t+1}^{(i)}).
		\end{equation}
	\end{assumption}
	
	Using this assumption, we first prove a technical lemma, which ensures that each step reduces the function value:
	\begin{lemma}[Monotonicity] \label{lemma:monotonicity}
		Under Assumption~\ref{ass:bcd}, if $\lr \leq \frac{\proba}{L_i R_i}$ the iterates of Equation~\eqref{eq:breg_cd} verify for all $i \in [d]$:
		\begin{equation}
		f(x_{t+1}^{(i)}) \leq f(x_t).
		\end{equation}
	\end{lemma}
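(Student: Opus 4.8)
The plan is to combine relative smoothness in direction $i$ with the update rule to show the one-step decrease is nonpositive. Throughout I write $x_{t+1} = x_{t+1}^{(i)}$ for the update in direction $i$, and recall that the iteration in Equation~\eqref{eq:breg_cd_eq} reads $\nabla h(x_{t+1}) - \nabla h(x_t) = -\frac{\lr}{\proba} \projA \nabla_i f(x_t)$. First I would decompose the objective gap via the definition of the Bregman divergence of $f$,
\[
f(x_{t+1}) - f(x_t) = D_f(x_{t+1}, x_t) + \nabla f(x_t)^\top (x_{t+1} - x_t) .
\]
The step $x_{t+1} - x_t$ is supported by $e_i$ (precisely the regime in which Assumption~\ref{ass:bcd} and Equation~\eqref{eq:separability_assumption} are consistent), so relative smoothness in direction $i$ bounds the first term by $L_i D_h(x_{t+1}, x_t)$.

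Next I would rewrite the linear term. Applying the separability assumption~\eqref{eq:separability_assumption} converts $\nabla f(x_t)^\top (x_{t+1} - x_t)$ into $\frac{1}{R_i}\nabla_i f(x_t)^\top \projA (x_{t+1} - x_t)$. Using symmetry of the orthogonal projection $\projA$ and substituting the update rule replaces $\projA \nabla_i f(x_t)$ by $-\frac{\proba}{\lr}(\nabla h(x_{t+1}) - \nabla h(x_t))$, and the three-point identity $(\nabla h(x_{t+1}) - \nabla h(x_t))^\top(x_{t+1} - x_t) = D_h(x_{t+1}, x_t) + D_h(x_t, x_{t+1})$ then turns the linear term into $-\frac{\proba}{\lr R_i}\left[D_h(x_{t+1}, x_t) + D_h(x_t, x_{t+1})\right]$.

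Putting the pieces together yields
\[
f(x_{t+1}) - f(x_t) \leq \left(L_i - \frac{\proba}{\lr R_i}\right) D_h(x_{t+1}, x_t) - \frac{\proba}{\lr R_i} D_h(x_t, x_{t+1}) .
\]
The stated step-size condition $\lr \leq \frac{\proba}{L_i R_i}$ forces $\frac{\proba}{\lr R_i} \geq L_i$, so the first coefficient is nonpositive; since $h$ is convex both Bregman divergences are nonnegative, and the right-hand side is therefore $\leq 0$, which is the claim.

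I expect the main obstacle to be the bookkeeping that bridges the ``direction-$i$'' quantities and the full-gradient quantities: one must check that relative smoothness is applied to the genuine step $x_{t+1}-x_t$ (i.e.\ that this step is effectively along $e_i$), and that Equation~\eqref{eq:separability_assumption} together with the symmetry of $\projA$ converts the inner products with the right sign, since any transpose or sign slip there would destroy the cancellation that produces the clean coefficient $L_i - \frac{\proba}{\lr R_i}$. Once those identities are in place, the remainder is the standard relative-smoothness descent argument.
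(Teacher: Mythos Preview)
Your proof is correct and uses exactly the same ingredients as the paper's: the three-point identity for $D_h$, the update rule \eqref{eq:breg_cd_eq}, the separability assumption \eqref{eq:separability_assumption}, directional relative smoothness, and the step-size bound. The only difference is cosmetic ordering: the paper starts from $D_h(x_{t+1}^{(i)},x_t)+D_h(x_t,x_{t+1}^{(i)})$ and works toward the function values, whereas you start from $f(x_{t+1})-f(x_t)$ and work toward the Bregman divergences; the resulting inequality is the same, and your flagged concern about applying directional smoothness to the actual step $x_{t+1}^{(i)}-x_t$ is exactly the implicit assumption the paper makes as well.
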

	\begin{proof}
		Using Assumption~\ref{ass:bcd} , we have that:
		\begin{align*}
			D_h(x_{t+1}^{(i)}, x_t) + D_h(x_t, x_{t+1}^{(i)}) &= \left[\nabla h(x_{t+1}^{(i)}) - \nabla h(x_t)\right]^\top (x_{t+1}^{(i)} - x_t) \\
			&= \frac{\lr}{\proba} \nabla_i f(x_t)^\top \projA (x_t - x_{t+1}^{(i)})\\
			&= \frac{\lr R_i}{\proba} \nabla f(x_t)^\top (x_t - x_{t+1}^{(i)})\\
			&= \frac{\lr R_i}{\proba} \left[D_f(x_{t+1}^{(i)}, x_t) - f(x_{t+1}^{(i)}) + f(x_t)\right]\\
			&\leq \frac{\lr}{\proba} R_i \left[L_i D_h(x_{t+1}^{(i)}, x_t) - f(x_{t+1}^{(i)}) + f(x_t)\right]\\
			&\leq D_h(x_{t+1}^{(i)}, x_t) - \frac{\lr R_i}{\proba}\left[ f(x_t) - f(x_{t+1}^{(i)})\right],
		\end{align*}
		where the last line uses that $\lr \leq \frac{\proba}{L_i R_i}$. In particular:
		\begin{equation*}
			f(x_{t+1}^{(i)}) \leq f(x_t) - D_h(x_t, x_{t+1}^{(i)}) \leq f(x_t).
		\end{equation*}
	\end{proof}
	
	\begin{theorem} \label{thm:bcd}
		Consider two functions $f$ and $h$ that verify Assumption~\ref{ass:bcd}. If the iterates are given by Equation~\eqref{eq:breg_cd}, and denoting for any $x \in {\rm dom}\ h$, $\mathcal{L}_t = (1 + \delta) D_\phi(x, x_{t}) + \frac{\eta_t}{\Rp} [f(x_{t}) - f(x)]$, we obtain for $\eta_t \leq \frac{\proba}{L_i R_i}$: 
		\begin{align*}
			\mathcal{L}_{t+1} \leq \max\left(\frac{1  + \Delta - \eta_t \mu}{1 + \delta}, 1 - \Rp\right) \mathcal{L}_t
		\end{align*}
	\end{theorem}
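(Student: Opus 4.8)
The plan is to run a standard Lyapunov (potential-function) argument for Bregman coordinate descent, the twist being that the sampling distribution $\ptilde$ differs from the learning-rate weights $p_i$. I would first establish the prox / three-point identity for the update. Since Equation~\eqref{eq:breg_cd_eq} gives $\nabla h(x_{t+1}^{(i)}) = \nabla h(x_t) - \frac{\eta_t}{p_i}\projA\nabla_i f(x_t)$, the minimizer characterization of $V_{i,t}$ yields, for every comparison point $u$,
\begin{equation*}
\frac{\eta_t}{p_i}\nabla_i f(x_t)^\top\projA(x_{t+1}^{(i)} - u) = D_h(u, x_t) - D_h(u, x_{t+1}^{(i)}) - D_h(x_{t+1}^{(i)}, x_t).
\end{equation*}
Setting $u = x$ and rearranging expresses $D_h(x, x_{t+1}^{(i)})$ in terms of $D_h(x, x_t)$, a negative term $-D_h(x_{t+1}^{(i)}, x_t)$, and an inner product that I would split along the update direction $x_t - x_{t+1}^{(i)}$ and the comparison direction $x_t - x$ (here $D_\phi$ in the statement is the divergence $D_h$ of Equation~\eqref{eq:bd}).

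Next I would treat the two pieces separately. For the update-direction piece I invoke the separability assumption~\eqref{eq:separability_assumption} to replace $\nabla_i f(x_t)^\top\projA(x_t - x_{t+1}^{(i)})$ by $R_i\, \nabla f(x_t)^\top(x_t - x_{t+1}^{(i)})$, expand that into $f(x_t) - f(x_{t+1}^{(i)}) + D_f(x_{t+1}^{(i)}, x_t)$, and then use relative smoothness together with the step-size bound $\eta_t \leq p_i/(L_iR_i)$ exactly as in Lemma~\ref{lemma:monotonicity}; this cancels the $D_f$ contribution against $-D_h(x_{t+1}^{(i)}, x_t)$ and leaves a clean per-coordinate function decrease $\frac{\eta_t R_i}{p_i}[f(x_t) - f(x_{t+1}^{(i)})]$. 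For the comparison-direction piece I would take the conditional expectation over $i \sim \ptilde$, so the factor $\ptilde/p_i = (1 + \deltaproba)^{-1}$ appears, use $\sum_i \nabla_i f(x_t) = \nabla f(x_t)$ and $\projA\nabla f(x_t) = \nabla f(x_t)$ to reassemble a (reweighted) full-gradient term, and then apply relative strong convexity in the form $\nabla f(x_t)^\top(x - x_t) \leq f(x) - f(x_t) - \mu D_h(x, x_t)$.

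Finally, I would assemble these bounds into $\mathbb{E}_t[\mathcal{L}_{t+1}]$. The distance terms collect into a coefficient $(1 + \Delta - \eta_t\mu)$ on $D_h(x, x_t)$ once the normalization $1 + \Delta = \sum_i p_i$ is accounted for, which after dividing by the Lyapunov weight $(1+\delta)$ gives the first factor in the max. On the function-value side, the strong-convexity step contributes a $-\eta_t[f(x_t)-f(x)]$ term which, combined with the Lyapunov term $\frac{\eta_t}{\Rp}[f(x_t)-f(x)]$ and with the per-coordinate decreases controlled uniformly through $\Rp = \min_i R_i^{-1}\ptilde$, collapses into the factor $(1-\Rp)$ (using $\frac{1}{\Rp}-1 = \frac{1-\Rp}{\Rp}$ and $\eta_{t+1}\leq\eta_t$ so the coefficient is non-increasing). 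Taking the larger of the two ratios yields the stated contraction.

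I expect the main obstacle to be the bookkeeping that reconciles the sampling distribution $\ptilde$ with the learning-rate weights $p_i$: because the reweighting factor $(1+\deltaproba)^{-1}$ is coordinate-dependent and the individual terms $\nabla_i f(x_t)^\top\projA(x-x_t)$ need not be sign-definite, recovering a genuine full-gradient term to which relative strong convexity applies is delicate, and it is exactly here that $\delta = \min_{i,t}\deltaproba$ (lower-bounding the reweighting) and $\Delta = \sum_i \ptilde\deltaproba$ (the normalization deficit) must be introduced to control the discrepancy and land the precise constants $1 + \Delta$ and $1 + \delta$.
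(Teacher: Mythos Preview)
Your outline matches the paper's proof in structure: three-point identity for the Bregman step, split along the update and comparison directions, relative smoothness plus the step-size bound (together with Lemma~\ref{lemma:monotonicity}) on the former, relative strong convexity on the latter, and the $\Rp$ argument to collapse the function-value terms into the $(1-\Rp)$ factor. The assembly you describe at the end is exactly what the paper does.

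The one substantive point is the obstacle you yourself flag. Your plan is to take the expectation first, obtain $\sum_i \frac{\eta_t}{1+\deltaproba}\,\nabla_i f(x_t)^\top\projA(x-x_t)$, and then somehow introduce $\delta$ and $\Delta$ to recover a full-gradient term. As you correctly observe, the summands are sign-indefinite, so one cannot simply replace $(1+\deltaproba)^{-1}$ by a uniform bound; relative strong convexity does not apply to this reweighted sum, and no amount of bookkeeping on the gradient side fixes it. The paper resolves this by reversing the order of operations: it multiplies the per-coordinate three-point identity by $(1+\deltaproba)$ \emph{before} taking the expectation. Because $\ptilde(1+\deltaproba)=p_i$, the comparison-direction term then averages to exactly $\eta_t\,\nabla f(x_t)^\top(x-x_t)$ with no residual weights, and strong convexity applies directly. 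The constants $(1+\delta)$ and $(1+\Delta)$ fall out of the \emph{divergence} terms instead: $(1+\deltaproba)D_h(x,x_{t+1}^{(i)})\geq(1+\delta)D_h(x,x_{t+1}^{(i)})$ on the left, and $\sum_i \ptilde(1+\deltaproba)\,D_h(x,x_t)=(\sum_i p_i)\,D_h(x,x_t)=(1+\Delta)D_h(x,x_t)$ on the right. With this single reordering, the rest of your outline goes through verbatim.
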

	
	Note that $\frac{1 + \Delta - \eta \mu}{1 + \delta} \geq 1 - \eta \mu \geq 1 - \frac{\proba}{R_i} \frac{\mu}{L_i} \geq 1 - \frac{(1 + \deltaproba)\mu}{L_i} \Rp$, so the first term generally dominates since we generally have $(1 + \deltaproba)\mu \leq L_i$ unless the condition number is very small, or $\delta_{i,t}$ very large.

	\begin{proof}[Proof of Theorem~\ref{thm:bcd}]
		For any $x \in {\rm dom} h$ We start by writing the 3 points inequality: 
		\begin{align*}
			D_h (x, x_{t+1}) + D_h(x_{t+1}, x_t) - D_h(x, x_t) &= [\nabla h(x_t) - \nabla h(x_{t+1})]^\top (x - x_{t+1})\\
			&= \frac{\lr}{p_i} \nabla_i f(x_t)^\top \projA (x - x_{t+1})\\
			&= \frac{\lr}{p_i} \nabla_i f(x_t)^\top (x - x_t) + \frac{\lr}{p_i} \nabla_i f(x_t)^\top  \projA (x_t - x_{t+1}).
		\end{align*}
		We now multiply everything by $1 + \deltaproba$, leading to: 
		\begin{align*}
			(1 + \deltaproba)\left[D_\hnet (x , x_{t+1}) + D_\hnet(x_{t+1}, x_t) - D_\hnet(x , x_t)\right] &= \frac{\lr (1 + \deltaproba)}{p_i} \nabla_i f(x_t)^\top  \projA (x  - x_t)\\
			&+ \frac{\lr(1 + \deltaproba)}{p_i} \nabla_i f(x_t)^\top \projA (x_t - x_{t+1}),
		\end{align*}
		which we rewrite as:
		\begin{align*}
			(1 + \delta)D_\hnet (x , x_{t+1}) &\leq  (1 + \deltaproba) D_\hnet(x , x_t) + \frac{\lr}{\ptilde} \nabla_i f(x_t)^\top  \projA (x  - x_t)\\
			&+ \frac{\lr}{\ptilde} \nabla_i f(x_t)^\top \projA (x_t - x_{t+1}) - (1 + \deltaproba)D_\hnet(x_{t+1}^{(i)}, x_t),
		\end{align*}
		In particular, when taking an expectation with respect to the sampling distribution, we obtain:
		\begin{align*}
			(1 + \delta)\esp{D_\hnet (x , x_{t+1})} &\leq \sum_{i=1}^d p_i D_\hnet(x , x_t) - \sum_{i=1}^d p_i D_\hnet(x_{t+1}^{(i)}, x_t) \\
			& + \lr \nabla f(x_t)^\top (x  - x_t) + \sum_{i=1}^d \lr \nabla_i f(x_t)^\top \projA (x_t - x_{t+1}^{(i)}).
		\end{align*}
		Using Assumption~\ref{ass:bcd} leads to:
		\begin{align*}
			\lr \nabla_i f(x_t)^\top \projA (x_t - x_{t+1}^{(i)}) &= \lr R_i \nabla f(x_t)^\top (x_t - x_{t+1}^{(i)})\\
			&= \lr R_i \left[D_f(x_{t+1}^{(i)}, x_t) - f(x_{t+1}^{(i)}) + f(x_t)\right]\\
			&\leq \lr R_i \left[L_i D_h(x_{t+1}^{(i)}, x_t) - f(x_{t+1}^{(i)}) + f(x_t)\right]
		\end{align*}
		Using that $\lr \leq \frac{\proba}{L_i R_i}$, we obtain:
		\begin{align*}
			(1 + \delta)\esp{D_h (x , x_{t+1})} &\leq (1 + \Delta) D_h(x , x_t) + \lr \nabla f(x_t)^\top (x  - x_t) + \sum_{i=1}^d \lr R_i \left[ - f(x_{t+1}^{(i)}) + f(x_t)\right].
		\end{align*}
		Note that by monotonicity of the iterations (Lemma~\ref{lemma:monotonicity}), $f(x_{t+1}^{(i)}) \leq f(x_t)$ for all $t,i$, and so, with $R_p = \min_i R_i^{-1} \ptilde$:
		\begin{equation}
		\sum_{i=1}^d \frac{R_i}{\ptilde} \ptilde \left[ - f(x_{t+1}^{(i)}) + f(x_t)\right] \leq \Rp^{-1} \left[ - \sum_{i=1}^d \ptilde f(x_{t+1}^{(i)}) + f(x_t)\right] \leq - \Rp^{-1}\left(\esp{f(x_{t+1})} - f(x_t)\right)
		\end{equation}
		Finally, the $\mu$-relative strong convexity of $f$ gives:
		\begin{equation}
		\nabla f(x_t)^\top (x  - x_t) = - \left[f(x_t) - f(x )\right] - D_f(x , x_t) \leq - \mu D_h(x , x_t) - f(x_t) + f(x )
		\end{equation}
		Combining everything leads to: 
		\begin{align*}
			(1 + \delta)\esp{D_h (x , x_{t+1})} + &\frac{\lr}{\Rp} \left[f(x_{t+1} - f(x )\right] \\
			&\leq \frac{1 + \Delta - \eta \mu}{1 + \delta} (1 + \delta) D_h(x , x_t) + \frac{\lr}{\Rp} \left(1 - \Rp\right) \left[f(x_t) - f(x )\right]    
		\end{align*}
		
	\end{proof}
	
	\section{Convergence results}
	\label{app:simple_token}
	
	\subsection{Introducing the problem}
	
	In this section, we introduce the consensus problem derived from the conceptual graph, and take its dual formulation. This section follows the same framework as~\citet{hendrikx2020dual}. Denoting $\sigmatilde = \frac{\nnodes\sigma}{\nnodes + \ntokens}$, this problem writes: 
	\begin{equation}\label{eq:full_consensus_pb}
	\underset{\ \forall i, \ \forall j, \ \theta^{(ij)} = u^{(i)}, \ \forall k, \ u^{(i)} = v^{(j)}}{\min_{\theta \in \R^{n \times d}, \ u \in \R^{n\times d}, \ v \in \R^d}} \sum_{i=1}^\nnodes \sum_{j=1}^\nfs f_{ij}(\theta^{(ij)}) + \frac{\sigmatilde}{2} \|u^{(i)}\|^2 + \sum_{k=1}^\ntokens \frac{\sigmatilde}{2}\|v^{(k)}\|^2.
	\end{equation}
	Introducing Lagrangian multipliers $y$ for each consensus constraint in the virtual part of the graph (between center nodes $u^{(i)}$ and computation nodes $\theta^{(ij)}$), and multipliers $x$ for the communication part of the graph, (between center nodes $u^{(i)}$ and tokens $v^{(k)}$), the dual problem writes:
	\begin{equation} \label{eq:full_dual_problem}
	\min_{x \in \R^{\nnodes \ntokens d},\  y \in \R^{\nnodes \nfs d}}  q_A(x,y) + \sum_{i=1}^n \sum_{j=1}^m f_{ij}^*((Ay)^{(ij)}), \hbox{ with } q_A(x,y) \triangleq \frac{1}{2\sigmatilde}(x, y)^\top A^\top A (x, y),
	\end{equation}
	where $A \in \R^{(\nnodes(\nfs + 1) + \ntokens) d \times \nnodes(\ntokens + \nfs)d}$ is the (weighted) incidence matrix of the augmented conceptual graph, which is such that $A e_{\ell_1 \ell_2} = \mu_{\ell_1 \ell_2} (e_{\ell_1} - e_{\ell_2})$, for any two nodes $\ell_1$ and $\ell_2$ (and an arbitrary orientation), where $e_{\ell_1 \ell_2}$ is the unit vector corresponding to edge $(\ell_1, \ell_2)$, and $e_{\ell_1}$, $e_{\ell_2}$ are the unit vectors corresponding to nodes $\ell_1$ and $\ell_2$. Note that we abused notations and wrote $(Ay)^{(ij)}$ instead of $(A(x,y))^{(ij)}$, since $(A(x,y))^{(ij)}$ does not actually depend on $x$. 
	
	Matrix $A$ has a very special structure, since it is the incidence matrix of a tripartite graph between computation nodes, communication nodes, and token nodes. We now have to choose the weights of matrix $A$. For communication edges (between communication nodes and tokens), we make the simple choice $\mu_{ik} = 1$. For computation edges (between communication nodes and computation nodes), we choose:
	\begin{equation}
	\mu_{ij}^2 = \alpha L_{ij}, \ \text{ with } \ \alpha = \frac{2\sigmatilde \ntokens}{\kappa_s},
	\end{equation}
	where $L_{ij}$ is the smoothness of function $f_{ij}$ and $\kappa_s = \max_i 1 + \sum_{j=1}^\nfs L_{ij} / \sigmatilde$. These choices follow from~\citet{hendrikx2020dual}, where we used that the ``communication part'' of the conceptual graph used to derive the token algorithm is quite specific (complete bipartite graph), and so $\lambda_{\min}(A_{\rm comm}^\top A_{\rm comm}) = \ntokens$, the number of tokens, as long as $\ntokens \leq \nnodes$~\citep{brouwer2011spectra}.
	
	\subsection{The TVR algorithm}
	\label{app:tvr}
	We now proceed to the derivation of the DVR algorithm, and to proving its theoretical guarantees. To achieve this, we use the Bregman coordinate descent iterations defined in the previous section, which are of the form:
	\begin{equation}\label{eq:breg_cd_tvr}
	(x,y)_{t+1} = \arg\min_{x,y} \frac{\lr}{p_i} \nabla_i \left[q_A((x,y)_t) + F^*(Ay_t) \right]^\top \projA x + D_h((x,y), (x,y)_t),
	\end{equation}
	where $F^*(\lambda) = \sum_{i,j} f_{ij}^*(\lambda^{(ij)})$. We now need to define the reference function $h$, which we define, following~\citet{hendrikx2020dual}, as $h(x,y) = h_x(x) + h_y(y)$, with:
	\begin{equation} \label{eq:reference_function}
	\forall i,j, \ h_{y}^{(ij)}(y^{(ij)}) = \frac{L_{ij}}{\mu_{ij}^2}f_{ij}^*(\mu_{ij} y^{(ij)}), \ \text{ and } \ h_x(x) = \frac{1}{2} \|x\|^2_{A_\comm^\dagger A_\comm},
	\end{equation}
	where $A_\comm \in \R^{(\nnodes + \ntokens) d \times (\nnodes (\nfs + \ntokens)}$ is the restriction of $A$ to communication nodes (and tokens). In order to avoid notations clutter, we slightly abuse notations and use $\projA$ instead in the remainder of this section.
	
	\begin{algorithm}
		\caption{Token Variance Reduced $(z_0)$}
		\label{algo:tvr}
		\begin{algorithmic}[1]
			\STATE $\tilde{\sigma} = \sigma \frac{\nnodes}{\nnodes+\ntokens}$ $\alpha = \frac{2 \ntokens \sigmatilde}{\kappa_s}$, $\pcomp = \left(1 + \frac{\kappa_s}{\nfs - 1 + \kappa_s}\right)^{-1}$, $\pcomm = 1 - \pcomp$, 
			\STATE $\eta = \min\left( \frac{\tilde{\sigma} p_\comm}{2\nnodes \ntokens}, \frac{\pcomp}{ \alpha \nnodes (\nfs - 1 + \kappa_s)} \right)$, $\rho_\comm = \frac{\nnodes \ntokens \eta}{p_\comm\tilde{\sigma}}$, $\rho_{\comp} = \frac{\nfs \nnodes \alpha \eta}{\pcomp}$. \COMMENT{Init}
			\STATE $\forall i \in [\nnodes]$, $\theta_0^{(i)} = - \sum_{j=1}^\nfs \nabla f_{ij}(z_0^{(i,j)}) / \tilde{\sigma}$;   \ \ $\forall k \in [\ntokens]$, $\theta_0^{\token, k} = 0$. \COMMENT{$z_0$ is arbitrary but not $\theta_0$.}
			\FOR[Run for $T$ iterations]{$t=0$ to $T-1$}
			\IF{ communication step (with probability $p_\comm$)}
			\STATE Pick $i \sim \mathcal{U}([\nnodes]), \ k \sim \mathcal{U}([\ntokens])$ \COMMENT{Choose next node and token uniformly at random}
			\STATE $\theta_{t+1}^{\token, k} = \theta_t^{\token, k} - \rho_{\comm}  (\theta_t^{\token, k} - \theta_t^{(i)})$ \COMMENT{Token update}
			\STATE $\theta_{t+1}^{(i)} = \theta_t^{(i)} + \rho_{\comm}(\theta_t^{\token, k} - \theta_t^{(i)})$ \COMMENT{Local node update}
			\ELSE 
			\STATE Pick $i \sim \mathcal{U}([\nnodes])$, $j \sim \mathcal{U}([\nfs])$ \COMMENT{Choose one node and data point at random}
			\STATE $z_{t+1}^{(i,j)} = \left(1 - \rho_{\comp}\right)z_t^{(i,j)} + \rho_{\comp} \theta_t^{(i)}$ \COMMENT{Virtual node update}
			\STATE $\theta_{t+1}^{(i)} = \theta_t^{(i)} - \frac{1}{\tilde{\sigma}}\left(\nabla f_{ij}(z_{t+1}^{(i,j)}) - \nabla f_{ij}(z_{t}^{(i,j)})\right)$ \COMMENT{Local update using $f_{ij}$}
			\ENDIF
			\ENDFOR
			\STATE \textbf{return} $\theta_K$
		\end{algorithmic}
	\end{algorithm}
	
	For the computation part, the algorithm can be recovered by following the exact same steps as \citet[Section 2]{hendrikx2020dual}, which are themselves inspired by the dual-free updates from~\citet{lan2017optimal}.
	
	For the communication part, there is a small difference in the fact that we do not sample all coordinates at once anymore. Instead, we sample edges of the communication graph one by one. The other difference is that communication edges are not between node $i$ and node $k$ anymore, but between node $i$ and \emph{token} $k$. In particular, the communication step writes:
	\begin{equation}
	\projA x_{t+1} = \projA x_t - \frac{\eta}{\sigmatilde p_{ik}} \projA e_{ij} e_{ik}^\top A^\top A (x_t, y_t).
	\end{equation} 
	Multiplying by $A$ on the left and defining $\theta_t = A_\comm (x_t,y_t)$ leads to: 
	\begin{equation}
	\theta_{t+1} = \theta_t - \frac{\eta}{\sigmatilde p_{ik}} A_\comm e_{ik} e_{ik}^\top A^\top \theta_t,
	\end{equation}
	where we used the fact that the update only affects communication nodes. Therefore, when we sample an edge between node $i$ and token $k$, this leads to:
	\begin{equation}
	\theta_{t+1} = \theta_t - \frac{\eta \nnodes \ntokens}{\pcomm \sigmatilde} W_{ik}\theta_t,
	\end{equation}
	where $W_{ik} = (e_i - e_k)(e_{i} - e_{k})^\top$, thus leading to the form obtained in Algorithm~\ref{algo:tvr}. From there, we can prove the following convergence theorem, from which all the non-accelerated results from the main text are derived. 
	
	\begin{theorem} \label{thm:tvr_full}
		The iterates of Algorithm~\ref{algo:tvr} verify:
		\begin{equation}
		\|\theta_t - \theta_\star\|^2 \leq \left(1 - \frac{\eta \ntokens \sigmatilde}{\kappa_s}\right)^t C_0, 
		\end{equation}
		where $C_0 = 2\sigmatilde \mathcal{L}_0$, with $\mathcal{L}_0$ the Lyapunov function from Theorem~\ref{thm:bcd} instantiated on the dual problem, which thus depends on the initialization. In particular, ignoring log factors in $C_0$, if $p_\comm = p_\comp = \frac{1}{2}$ and for any $\varepsilon > 0$, 
		\begin{equation}
		\ncomps = O((\nfs + \kappa_s)\log\varepsilon^{-1}) \ \text{ and } \  \ncomms = O(\nnodes \kappa_s \log\varepsilon^{-1})
		\end{equation}
		are required in total in order to obtain $\|\theta_t - \theta_\star\|^2 \leq \varepsilon$.
		
	\end{theorem}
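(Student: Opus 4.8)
The plan is to recognize Algorithm~\ref{algo:tvr} as nothing but the Bregman coordinate descent iteration~\eqref{eq:breg_cd} applied to the dual problem~\eqref{eq:full_dual_problem}, with the reference function $h$ of~\eqref{eq:reference_function}, and then to invoke Theorem~\ref{thm:bcd}. The derivation in the preceding paragraphs already exhibits the communication and computation updates as the primal image $\theta_t = A_\comm (x,y)_t$ of these dual coordinate steps, so the analytical work reduces to three subtasks: (i) checking Assumption~\ref{ass:bcd} for the pair $(f,h)$ coming from the dual, (ii) reading off the contraction factor from Theorem~\ref{thm:bcd}, and (iii) translating the resulting decay of the Lyapunov function $\mathcal{L}_t$ into decay of $\|\theta_t - \theta_\star\|^2$ and into iteration counts.

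First I would verify Assumption~\ref{ass:bcd}. The separability condition~\eqref{eq:separability_assumption} follows from the block structure of $h$ and of $\projA$ exactly as in~\citet{hendrikx2020dual}; the only new point is that communication edges are sampled one at a time rather than as a block, which changes the value of $R_i = (\projA)_{ii}$ and hence $\Rp = \min_i R_i^{-1}\ptilde$, but not the form of the argument. The relative strong-convexity constant $\mu$ is where the spectral properties of the conceptual graph enter: since $q_A(x,y) = \frac{1}{2\sigmatilde}(x,y)^\top A^\top A (x,y)$ and $h_x$ is the squared $A_\comm^\dagger A_\comm$-norm, $\mu$ is governed by the smallest positive eigenvalue of the appropriately weighted graph matrix, which for the complete bipartite communication part equals $\lambda_{\min}(A_\comm^\top A_\comm) = \ntokens$; combined with the computation-edge weights $\mu_{ij}^2 = \alpha L_{ij}$ and $\alpha = 2\sigmatilde\ntokens/\kappa_s$, and after the regularization normalization, this produces $\mu = \ntokens\sigmatilde/\kappa_s$. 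The directional relative smoothness constants $L_i$ come, on the computation side, from the $L_{ij}$-smoothness of $f_{ij}$ through the dual-free change of variable of~\citet{lan2017optimal}, and on the communication side from the spectrum of $W_{ik}$; these are exactly what dictate the admissible step-size $\eta \leq \proba/(L_i R_i)$ recorded in the \texttt{Init} lines of Algorithm~\ref{algo:tvr}.

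With Assumption~\ref{ass:bcd} in hand, Theorem~\ref{thm:bcd} gives $\mathcal{L}_{t+1}\leq \max\!\left(\frac{1+\Delta-\eta\mu}{1+\delta},\, 1-\Rp\right)\mathcal{L}_t$. On the complete graph the sampling is exact, so $\ptilde=\proba$ forces $\delta=\Delta=0$ and the factor collapses to $1-\min(\eta\mu,\Rp)$; checking that the step-sizes of Algorithm~\ref{algo:tvr} are chosen precisely so that $\eta\mu$ is the binding term then yields the advertised rate $1-\eta\ntokens\sigmatilde/\kappa_s$. To pass from $\mathcal{L}_t$ to $\|\theta_t-\theta_\star\|^2$, I would write $\theta_t = A_\comm (x,y)_t$ and $\theta_\star = A_\comm (x,y)_\star$, and use that $\|A_\comm z\|^2 \leq \|A z\|^2 = 2\sigmatilde\, q_A(z)$ because $A_\comm$ is a row-restriction of $A$, so that the primal error is controlled by the quadratic part of the dual gap and hence by $2\sigmatilde\mathcal{L}_t$; this is the source of the constant $C_0 = 2\sigmatilde\mathcal{L}_0$.

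Finally, to extract the complexities I would set $\pcomp=\pcomm=\frac{1}{2}$, substitute the chosen $\eta$ into the rate to bound the total number of iterations by $T = O(\nnodes\kappa_s\log\varepsilon^{-1})$, and split $T$ into its computation and communication halves. The total communication count $T/2 = O(\nnodes\kappa_s\log\varepsilon^{-1})$, shared among $\ntokens$ tokens each chosen uniformly, gives $\ncomms = O(\frac{\nnodes}{\ntokens}\kappa_s\log\varepsilon^{-1})$ per token; the per-node iteration count $T/(2\nnodes)=O(\kappa_s\log\varepsilon^{-1})$, plus the one-time $\nfs$ gradients needed to initialize $\theta_0^{(i)}=-\sum_j \nabla f_{ij}(z_0^{(i,j)})/\sigmatilde$, gives $\ncomps = O((\nfs+\kappa_s)\log\varepsilon^{-1})$. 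The main obstacle I anticipate is subtasks (i)–(ii): correctly computing $\mu$ and the $L_i$ from the tripartite incidence matrix $A$ with its two distinct edge weightings, and accounting for the one-edge-at-a-time communication sampling in $\Rp$, so that $\eta\mu$ and $\Rp$ balance to reproduce exactly the stated rate rather than a lossy constant.
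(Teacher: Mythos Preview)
Your plan is essentially the paper's own proof: identify Algorithm~\ref{algo:tvr} as the Bregman coordinate descent iteration~\eqref{eq:breg_cd} on the dual~\eqref{eq:full_dual_problem} with the reference function~\eqref{eq:reference_function}, verify Assumption~\ref{ass:bcd} (the structural identity~\eqref{eq:separability_assumption} separately for computation and communication coordinates, relative strong convexity $\mu=\alpha/2$ via $\lambda_{\min}^+(A_\comm^\top A_\comm)=\ntokens$, and directional relative smoothness edge by edge), then invoke Theorem~\ref{thm:bcd} with $\delta=\Delta=0$ on the complete graph. The paper structures the argument in exactly these three pieces.

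The one place you diverge is the final complexity count. Despite what the theorem statement says, the paper's proof does \emph{not} set $\pcomm=\pcomp=\tfrac12$; it first balances the two step-size constraints, obtaining $\pcomm=\frac{\kappa_s}{\nfs-1+\kappa_s}\,\pcomp$, and then reads off $\pcomp/\rho=O(\nfs+\kappa_s)$ and $\pcomm/\rho=O(\nnodes\kappa_s)$ directly. Your route, taking $\pcomm=\pcomp=\tfrac12$ and asserting $T=O(\nnodes\kappa_s)$, does not quite work: with equal probabilities the \emph{computation} constraint $\eta\le p_{ij}/\smooth_{ij}$ is the binding one whenever $\nfs>1$, which forces $T=O(\nnodes(\nfs+\kappa_s))$ rather than $O(\nnodes\kappa_s)$. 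The $\nfs$ term in $\ncomps$ then comes from the iteration count $T/(2\nnodes)$ itself, not from the one-time initialization of $\theta_0^{(i)}$, and the total communication count would be $O(\nnodes(\nfs+\kappa_s))$ rather than the stated $O(\nnodes\kappa_s)$. This is a bookkeeping slip (and arguably an inconsistency in the theorem statement), not a flaw in the overall strategy.
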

	
	The proof of this algorithm follows several steps, that we detail in the next subsections. We first show that the objective function defined in Equation~\eqref{eq:full_consensus_pb}, together with the reference function $h$ defined in Equation~\eqref{eq:reference_function} satisfies Assumption~\ref{ass:bcd}, so that we can apply Theorem~\ref{thm:bcd}. Then, we show how to choose the remaining parameters (and in particular $\pcomm$ and $\pcomp$) optimally, and evaluate the rate in terms of constants of the problem (number of nodes, number of tokens, smoothness and strong convexity of the local functions...).
	
	\subsection{Verifying Assumption~\ref{ass:bcd}}
	We start by showing that Assumption~\ref{ass:bcd} is verified in this case, which includes three parts: Equation~\eqref{eq:separability_assumption}, relative strong convexity, and directional relative smoothness. 
	
	\subsubsection{Verifying Structural assumptions.} We first verify that the updates of our problem verify Equation~\eqref{eq:separability_assumption} from Assumption~\ref{ass:bcd}.
	
	\textbf{1 - Computation coordinates.} Computation coordinates corresponding to the edge between computation and communication subnodes in Figure~\ref{fig:simple_token}. In particular, the graph becomes disconnected if they are removed, thus implying that $\projA e_i = e_i$. In particular, 
	\begin{equation}
	\nabla h(x_{t+1}) = \nabla h(x_t) - \frac{\lr}{p_i} \nabla_i f(x_t).
	\end{equation}
	Yet, for our specific choice of $h$ (which is such that $\hcomp(x) = \sum_i \hcomp^{(i)}(x^{(i)})$), this implies that $x_{t+1}^{(i)} - x_t = v_i$ for some $v_i$ that only has support on coordinate $i$, and in particular:
	\begin{equation}
	\nabla_i f(x_t)^\top \projA (x_t - x_{t+1}^{(i)}) = R_i \nabla_i f(x_t)^\top v_i = R_i \nabla f(x_t)^\top v_i  = R_i \nabla f(x_t)^\top (x_t - x_{t+1}^{(i)})
	\end{equation}
	
	\textbf{2 - Network coordinates.} Network coordinates have a different structure. In this case, the reference function $\hcomm$ is the quadratic form induced by $\projA$, which facilitates analysis. The updates write: 
	\begin{equation}
	\projA x_{t+1} = \projA x_t - \projA \frac{\lr}{p_i} \nabla_i f(x_t)
	\end{equation}
	Although $\hcomm$ is not separable, we can leverage the presence of $\projA$ to write:
	\begin{equation*}
		\nabla_i f(x_t)^\top \projA (x_t - x_{t+1}^{(i)}) = \frac{\eta_t}{p_i} \nabla_i f(x_t)^\top \projA \nabla_i f(x_t) = \frac{\eta_t}{p_i}R_i \nabla f(x_t)^\top \nabla_i f(x_t).
	\end{equation*}
	We then use that $\nabla f(x_t) = \projA \nabla f(x_t)$, leading to:
	\begin{equation*}
		\nabla_i f(x_t)^\top \projA (x_t - x_{t+1}^{(i)}) = \frac{\eta_t}{p_i} R_i \nabla f(x_t)^\top \projA \nabla_i f(x_t) = R_i \nabla f(x_t)^\top \projA (x_t - x_{t+1}^{(i)}).
	\end{equation*}
	Now that we have proven that $h$ and $f$ verify the structural assumptions given by Equation~\eqref{eq:separability_assumption}, it remains to evaluate the relative strong convexity and directional smoothness constants $\mu$ and $L_i$. 
	
	\subsubsection{Relative Strong Convexity.} Since the structure of the dual problem and the reference function $h$ are the same, we directly have from~\citet[Appendix B.1]{hendrikx2020dual} that the relative strong convexity constant is equal to
	\begin{equation}
	\mu = \frac{\alpha}{2} = \frac{\ntokens}{\sigmatilde + \sum_{j=1}^\nfs L_{ij}},
	\end{equation}
	since the smallest eigenvalue of the communication graph is equal to $\ntokens$ (the number of tokens) in this case. 
	
	\subsubsection{Directional Relative Smoothness.} We now evaluate the relative smoothness constants.
	
	\textbf{1 - Computation edges.} In the computation case, similarly to strong convexity, we directly get from~\citet[Appendix B.1]{hendrikx2020dual} that $\smooth_{ij}$, the relative directional smoothness for virtual node $i,j$ (or just $\smooth_i$ if there is only one virtual node), can be obtained as:
	\begin{equation}
	\smooth_{ij} = \alpha \left(1 + \frac{L_{ij}}{\sigma_i}\right),
	\end{equation}
	Plugging in the value of $\alpha$, this leads to: 
	\begin{equation}
	\smooth_{ij} = \frac{2\ntokens}{\sigmatilde} \frac{\sigmatilde + L_{ij}}{\sigmatilde + \sum_{j=1}^\nfs L_{ij}}
	\end{equation}
	
	\textbf{2 - Communication edges.} In this case, we cannot use the results from DVR directly because the sampling of communication coordinates is different. While DVR sampled all communication edges at once, we only sample one at each step. In this case, we have that the directional relative smoothness is equal to: 
	\begin{equation}
	D_f(x + \Delta_{uv}, x) = \|\Delta_{uv}\|^2_{A^\top \Sigma A}= \mu_{uv}^2(\sigma_u^{-1} + \sigma_v^{-1})\|\Delta_{uv}\|^2 = \frac{\mu_{uv}^2(\sigma_u^{-1} + \sigma_v^{-1})}{e_{uv}^\top \projA e_{uv}} \|\Delta_{uv}\|^2_{\projA}.
	\end{equation}
	In particular, for communication edges, and with the choice that $\mu_{uv}^2 = 1$ and $\sigma_u = \sigma_v = \sigmatilde$:
	\begin{equation}
	D_f(x + \Delta_{uv}, x) \leq \smooth_{uv} D_h(x + \Delta_{uv}, x), \ \text{ with } \ \smooth_{uv} = \frac{2}{\sigmatilde R_{uv}}
	\end{equation}
	
	\subsection{Convergence guarantees}
	\label{app:cv_tvr}
	We have shown in the previous subsection that we can apply Theorem~\ref{thm:bcd} to obtain convergence guarantees for our token algorithms. For the communication edges, the step-size constraint leads to:
	\begin{equation}
	\lr \leq \frac{p_{uv}}{R_{uv} \smooth_{uv}} = \frac{p_\comm \sigmatilde}{2\nnodes \ntokens}
	\end{equation}
	For the computation edges, we can set (as in the DVR article) $p_{ij} \propto 1 + L_{ij} / \sigmatilde$. In particular, the normalizing factor is equal to $\sum_{i=1}^n\sum_{j=1}^\nfs 1 + L_{ij} / \sigmatilde = \nnodes(\nfs + \sum_{j=1}^\nfs L_{ij} / \sigmatilde) = \nnodes(\nfs - 1 + \kappa_s)$, where we recall that $\kappa_s = 1 + \sum_{j=1}^\nfs L_{ij} / \sigmatilde$. Therefore, we obtain: 
	\begin{equation}
	\lr \leq \frac{p_{ij}}{\smooth_{ij}} \leq \frac{p_\comp \sigmatilde \kappa_s}{2\nnodes \ntokens (\nfs - 1 + \kappa_s)}.   
	\end{equation}
	We want to balance $\pcomm$ and $\pcomp$ such that these two constraints match, leading to:
	\begin{equation}
	\pcomm = \frac{\kappa_s}{\nfs - 1 + \kappa_s} \pcomp.
	\end{equation}
	Since $\pcomm = 1 - \pcomp$, this leads to:
	\begin{equation}
	\pcomp = \left(1 + \frac{\kappa_s}{\nfs - 1 + \kappa_s}\right)^{-1}.
	\end{equation}
	
	Assuming $\delta \geq 0$ and $\Delta < \frac{\lr\mu}{2}$, the rate of convergence of Algorithm~\ref{algo:tvr} is $\rho = \lr \mu - \Delta \geq \lr \mu /2$. In particular, we directly have that the computation complexity is equal to:
	\begin{equation}
	\frac{\pcomp}{\rho} = 4 (\nfs - 1 + \kappa_s).
	\end{equation}
	Similarly, the communication complexity is equal to:
	\begin{equation}
	\frac{\pcomm}{\rho} = 2\nnodes\kappa_s.
	\end{equation}

	\subsection{Special cases}
	\label{app:multiple_tokens}
	
	\subsubsection{Complete graphs}
	All the theorems in the main paper are actually direct corollaries of Theorem~\ref{thm:tvr_full}. We provide below how they can be derived in each case.
	
	\textbf{Theorem~\ref{thm:token_vr}}: We apply Theorem~\ref{thm:tvr_full} with $\delta = 0$ (since the graph is complete, so we know the true sampling distribution).
	
	\textbf{Theorem~\ref{thm:multiple_tokens}}: When $\nfs = 1$, all the derivations remain the same, but we now have that $\kappa_s = 1 + L_i / \sigmatilde = \kappa$, and so the computation complexity is equal to $\nfs - 1 + \kappa_s = \kappa$. 
	
	\textbf{Theorem~\ref{thm:simple_token}}: This result can be recovered by simply taking $K=1$.
	
	\subsubsection{General graphs.}
	Consider that the transitions between nodes are ruled by matrix $W$, which is such that $\|W^t \pi_0 - \pi_\star\|_\infty \leq C (1 - \gamma)^t$ for any starting distribution $\pi_0$, with $C > 0$ a constant, $\pi_\star$ the stationary distribution of the random walk, and $\gamma > 0$ a constant which can be interpreted as the inverse of the mixing time of the Markov Chain with transition matrix $W$. This is true as long as the underlying Markov Chain is irreducible and aperiodic. In this case, then after $O(\gamma^{-1} \log(C/(\eta\mu))$ steps, we have that for all $i$: 
	\begin{equation}
	|\ptilde - (\pi_\star)_i| \leq \frac{\eta\mu}{4},
	\end{equation}
	so in particular by taking $\proba = (\pi_\star)_i + \frac{\eta\mu}{4}$ satisfies $\ptilde(1 + \deltaproba) = \proba$, with $0 \leq \deltaproba \leq \frac{\eta\mu}{2}$. Then, using Theorem~\ref{thm:bcd}, we recover the same result as in Theorem~\ref{thm:tvr_full}, with $\eta\mu$ replaced by $\eta \mu - \Delta \geq \frac{\eta\mu}{2}$. 
	
	Note that the value of $\eta$ depends on $\Delta$, which itself depends on $\eta$, so the above derivations technically result in a circular argument. To avoid this, one can simply use a slightly different $\tilde{\eta} = \min_i \frac{(\pi_\star)_i}{L_i R_i} \leq \eta$ to set the number of token jumps. In practice, we do not need to precisely evaluate these log factors, and taking $\tilde{C} \gamma^{-1}$ jumps with a small constant $\tilde{C}$ is enough.

	\section{Acceleration}
	\label{app:acceleration}
	In the accelerated case, the theory does not follow directly from Theorem~\ref{thm:tvr_full}, since the algorithmic core is different. Indeed, we use a variant of Accelerated Proximal Coordinate Gradient~\citep{lin2015accelerated} instead of Bregman coordinate descent on the dual formulation. Yet, we can directly reuse the convergence results for ADFS~\citep[Theorem 1]{hendrikx2019accelerated}, which we (informally) state below:
	\begin{theorem}
		ADFS has iteration complexity $O(\rho \log \varepsilon^{-1})$, with 
		\begin{equation}
		\rho^2 \leq \min_{k \ell} \frac{\lambda_{\min}^+(A^\top \Sigma^{-1} A)}{\Sigma_{kk}^{-1} + \Sigma_{ll}^{-1}} \frac{p_{k\ell}^2}{\mu_{k\ell}^2 R_{k\ell}}.
		\end{equation}
	\end{theorem}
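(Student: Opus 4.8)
The stated bound is, as the authors note, an (informal) restatement of \citet[Theorem 1]{hendrikx2019accelerated}, so my plan is to recover it by applying Accelerated Proximal Coordinate Gradient~\citep{lin2015accelerated} to the dual problem~\eqref{eq:full_dual_problem}, and then to instantiate the resulting rate $\rho$ on the conceptual graph so as to obtain Theorem~\ref{thm:tavr}. First I would split the dual objective into its smooth quadratic part $q_A(x,y)=\tfrac12(x,y)^\top A^\top \Sigma^{-1} A(x,y)$ and its separable, proximable part $\sum_{ij} f_{ij}^*$, and note, exactly as in Appendix~\ref{app:tvr}, that the dual-free change of variables through the reference function $h$ turns the composite term into individual proximal operators of the $f_{ij}^*$. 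This is what lets APCG be run coordinate-by-coordinate over the edges $(k,\ell)$ of the augmented graph.

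The two ingredients feeding $\rho$ are then a modulus of strong convexity and a set of coordinate smoothness constants. For the former, $q_A$ is not strongly convex on all of $\R$ but only on $\mathrm{Im}(A^\top)$, where its modulus is $\lambda_{\min}^+(A^\top \Sigma^{-1} A)$; the key point is that gradients of $q_A$, and hence the APCG iterates, stay confined to $\Ker(A)^\perp=\mathrm{Im}(A^\top)$, so that the effective strong convexity is precisely $\lambda_{\min}^+(A^\top\Sigma^{-1}A)$. For the latter, along the direction of edge $(k,\ell)$ the smoothness of $q_A$ in the metric induced by $h$ and the preconditioner $R_{k\ell}=(\projA)_{k\ell,k\ell}$ is $\mu_{k\ell}^2 R_{k\ell}(\Sigma_{kk}^{-1}+\Sigma_{\ell\ell}^{-1})$, which is the same computation I carried out for $\smooth_{uv}$ in Appendix~\ref{app:cv_tvr}. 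Plugging the strong convexity modulus and these per-edge smoothness constants into the strongly convex APCG guarantee with sampling probabilities $p_{k\ell}$ gives a per-iteration contraction $1-\rho$ with $\rho=\min_{k\ell}p_{k\ell}\sqrt{\lambda_{\min}^+(A^\top\Sigma^{-1}A)/(\mu_{k\ell}^2 R_{k\ell}(\Sigma_{kk}^{-1}+\Sigma_{\ell\ell}^{-1}))}$, i.e. exactly the claimed $\rho$, and hence the stated iteration complexity.

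With the bound in hand, deriving Theorem~\ref{thm:tavr} amounts to evaluating each factor for our tripartite conceptual graph. I would reuse $\Sigma=\sigmatilde I$, $\mu_{ik}=1$ on communication edges and $\mu_{ij}^2=\alpha L_{ij}$ on computation edges, and read off $\lambda_{\min}^+(A^\top A)$ from the block structure of the graph, the complete bipartite communication part contributing its gap $\ntokens$ exactly as it did for the strong convexity constant $\mu=\alpha/2$~\citep{brouwer2011spectra}. Separating the $\min$ over computation versus communication edges and choosing the accelerated non-uniform probabilities $p_{ij}\propto \mu_{ij}=\sqrt{\alpha L_{ij}}$ balances the two contributions and produces the $\sqrt{\nfs\kappa_s}$ and $\sqrt{\kappa_s}$ dependences; translating the single iteration count into per-node gradient/prox steps and per-token jumps through $\pcomp,\pcomm$ and the $\nnodes/\ntokens$ factor then yields $\ncomps=O((\nfs+\sqrt{\nfs\kappa_s})\log\varepsilon^{-1})$ and $\ncomms=O(\tfrac{\nnodes}{\ntokens}\sqrt{\kappa_s}\log\varepsilon^{-1})$.

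The main obstacle is the restricted strong convexity: one must argue rigorously that the APCG iterates never leave $\mathrm{Im}(A^\top)$, so that $\lambda_{\min}^+$ rather than the useless $\lambda_{\min}=0$ governs the rate, and that the non-standard preconditioner $R_{k\ell}$ together with the Bregman/dual-free change of variables is compatible with the APCG analysis of \citet{lin2015accelerated}. The secondary difficulty is purely spectral, namely computing $\lambda_{\min}^+(A^\top A)$ for the augmented tripartite graph, but the complete-bipartite structure of the communication block makes this tractable. Since the hard analytical work is already packaged inside the cited ADFS guarantee, the bulk of the effort here is the careful bookkeeping of these spectral and sampling constants rather than a new convergence argument.
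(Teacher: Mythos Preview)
Your proposal is correct, and in fact does more than the paper does for this particular statement. In the paper, this theorem is not proved at all: it is imported verbatim as \citep[Theorem 1]{hendrikx2019accelerated} and used as a black box. The authors then simply evaluate each quantity in the bound on their conceptual graph (listing $\lambda_{\min}^+(A^\top\Sigma^{-1}A)$, $\Sigma_{kk}^{-1}$, $p_{k\ell}$, $\mu_{k\ell}$, $R_{k\ell}$ for the communication edges, quoting the analogous computation for the computation edges from the ADFS paper), balance $\pcomm$ and $\pcomp$ so that $\rho_\comm=\rho_\comp$, and read off $\ncomms$ and $\ncomps$.

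Your plan goes one level deeper: you sketch why the APCG rate takes this form (restricted strong convexity of $q_A$ on $\mathrm{Im}(A^\top)$ giving $\lambda_{\min}^+$, per-edge smoothness $\mu_{k\ell}^2 R_{k\ell}(\Sigma_{kk}^{-1}+\Sigma_{\ell\ell}^{-1})$), and you correctly flag the two genuine technical points that make this nontrivial, namely the confinement of the iterates to $\Ker(A)^\perp$ and the compatibility of the preconditioner with the APCG analysis. That is precisely the content of the cited ADFS theorem, so your outline is the right one; just be aware that the paper itself does not re-derive any of this and would accept the citation as the entire ``proof''. One small slip: the dual-free/Bregman trick is specific to the non-accelerated analysis of Appendix~\ref{app:bcd}; in the accelerated route, ADFS uses proximal operators of the $f_{ij}^*$ directly rather than a Bregman reference function, so you should not invoke $h$ here. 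For the downstream instantiation that yields Theorem~\ref{thm:tavr}, your approach coincides with the paper's.
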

	
	For our problem (conceptual graph), we obtain the following values for the parameters involved in the computation of $\rho$ when a communication edge $(k, \ell)$ between a node and the token is sampled:
	\begin{align*}
		&\lambda_{\min}^+(A^\top \Sigma^{-1} A) = \frac{\ntokens}{2 \sigmatilde \kappa_s}\\
		&\Sigma_{kk}^{-1} = \Sigma_{\ell\ell}^{-1} = \sigmatilde^{-1}\\
		&p_{k\ell} = \frac{\pcomm}{\nnodes\ntokens}\\
		&\mu_{k\ell} = 1\\
		&R_{k\ell} = \frac{1}{\ntokens}.
	\end{align*}
	In the end, this leads to 
	\begin{equation}
	\rho_\comm^2 = \frac{\pcomm^2}{4n^2\kappa_s}. 
	\end{equation}
	Similarly, we have (just like in the ADFS paper, since the computation part of the graph is the same):
	\begin{equation}
	\rho_\comp^2 = \frac{\pcomp^2}{2n^2 (m + \sqrt{m\kappa_s})^2}.
	\end{equation}
	We now fix $p_\comm$ and $p_\comp$ so that $\rho_\comm = \rho_\comp$, similarly to Section~\ref{app:cv_tvr}. This leads to 
	\begin{equation}
	p_\comm = \frac{2\kappa_s}{m + \sqrt{m\kappa_s}} p_\comp,
	\end{equation}
	and so the communication and computation complexities are respectively:
	\begin{equation*}
		\frac{p_\comm}{\rho_\comm} = 2n \kappa_s, \text{ and }  \frac{p_\comp}{\rho_\comp} = \sqrt{2}n (m + \sqrt{m\kappa_s}).
	\end{equation*}
	Theorem~\ref{thm:tavr} is obtained by expressing these complexities in terms of per-node and per-token quantities. 
	
	\section{Experiments}
	\label{app:experiments}
	
	For the experiments, we use the same setting as~\citet{hendrikx2020dual}, meaning that we solve the following logistic regression problem:
	\begin{equation}
	\min_{\theta \in \R^d}  \sum_{i=1}^n \left[\frac{\sigma}{2} \|\theta\|^2 + \sum_{j=1}^m \frac{1}{m}\log(1 + \exp(-y_{ij} X_{ij}^\top \theta)) \right],
	\end{equation}
	where the pairs $(X_{ij}, y_{ij}) \in \R^d \times \{-1, 1\}$ are taken from the RCV1 dataset, which we downloaded from \url{https://www.csie.ntu.edu.tw/~cjlin/libsvmtools/datasets/binary.html}. We choose the regularization parameter as $\sigma = 10^{-5}$. All experiments were run on a standard laptop, but using MPI to communicate between nodes. 
	
	\textbf{Time.} We choose to report ideal times: to get the execution time of an algorithm, we compute the minimum time it takes to execute its sequence of updates, given fixed communication and computation delays $\tau_\comm$ and $\tau_\comp$. More specifically, we draw a sequence of actions $S$, and denote $S_\ell$ the $\ell$-th action from this sequence, and $T_i(\ell)$ the time at which node $i$ finishes executing update $\ell$. All nodes start from $T_i(0) = 0$.
	\begin{itemize}
		\item If $S_\ell$ is a local computation at node $i$, then node $i$ increases its local time by $\tau_\comp$, \emph{i.e.}, $T_i(\ell) = T_i(\ell - 1) + \tau_\comp$. For $j\neq i$, $T_j(\ell) = T_j(\ell - 1)$.
		\item If $S_\ell$ is a token jump from node $j$ to node $i$, then $T_i(\ell) = \max(T_i(\ell - 1), T_j(\ell - 1) + \tau_\comm)$. For $k\neq i$, $T_k(\ell) = T_k(\ell - 1)$. 
	\end{itemize}
	The sequence $S_\ell$ is implemented by simply sharing a random seed between nodes. Token algorithms could also be implemented without executing this shared schedule, but this would not strictly correspond to Algorithm~\ref{algo:tvr} since the sampling of the edges would not be \emph{i.i.d.}
	
	\textbf{Batch smoothness.} Since the smoothness of the full functions $f_i$ is hard to compute, we approximated it by taking $L_{\rm batch} = 0.02 \times \max_{ij} L_{ij}$. Note that, following~\citet{hendrikx2020dual}, we implemented TVR with this batch smoothness instead of $\sum_{j} L_{ij}$ (which corresponds to taking $\alpha = 2 \sigmatilde K / \kappa$ instead of $\alpha = 2 \sigmatilde K / \kappa_s$). In particular, the communication complexity of TVR is thus proportional to $\kappa$ (similarly to that of Algorithm~\ref{algo:simple_token}) instead of $\kappa_s$. We proved Theorem~\ref{thm:token_vr} with $\kappa_s$ since it is simpler and less restrictive. 
	
	\textbf{Code.} We provide the code used to run the experiments from Figure~\ref{fig:token_exp} in supplementary material. All algorithms are coded in Python, using MPI for communications. This code has not been optimized for efficiency, but rather aims at providing an actual implementation of token algorithms that can be used out of the box. Due to the similarities between algorithms, we based this code on the code in the supplementary material from~\citet{hendrikx2020dual}. 
	
\end{document}